\documentclass[11pt,a4paper]{article}
\usepackage[top=2.5cm,bottom=2.5cm,left=2.8cm,right=2.8cm]{geometry}
\usepackage[dvipsnames,svgnames,x11names]{xcolor}
\usepackage{float}
\usepackage{graphicx}
\usepackage{amsmath,amssymb,mathtools}
\usepackage{amsthm}
\usepackage{fancyhdr}
\usepackage{hyperref}
\usepackage{subfigure}
\usepackage{tikz}
\usetikzlibrary{decorations.fractals,shapes.geometric,calc}
\usetikzlibrary{arrows.meta}

\usepackage[english]{babel}
\usepackage[backend=biber, style=numeric, sorting=none]{biblatex}
\usepackage{csquotes}

\hypersetup{
    pdfauthor={FLAVIO MOSCA},
    pdftitle={A PROBLEM FOR ISOLATED SINGULARITIES OF SURFACES},
    colorlinks=true,
    linkcolor=RoyalBlue,
    citecolor=RoyalBlue,
    urlcolor=RoyalBlue,
    filecolor=RoyalBlue,
    linktoc=all
}

\theoremstyle{plain}
\newtheorem{theorem}{Theorem}[section]
\newtheorem{lemma}[theorem]{Lemma}
\newtheorem{corollary}[theorem]{Corollary}
\newtheorem{addendum}[theorem]{Addendum}
\theoremstyle{definition}
\newtheorem{definition}[theorem]{Definition}
\theoremstyle{remark}
\newtheorem{remark}[theorem]{Remark}

\begin{document}

\begin{center}
    \textbf{\large A PROBLEM FOR ISOLATED SINGULARITIES OF SURFACES.}
    
    \vspace{0.8em}
    by
    
    \vspace{0.8em}
    \textbf{FLAVIO MOSCA}
\end{center}

\noindent\textbf{Abstract.} We study the extension of the conformal structure of a Riemann surface, obtained as a submanifold of $\mathbf R^n$, across an isolated singular point, under hypotheses that are metric rather than analytic. The main result (1989) is that an isolated singularity is \emph{conformally point-like} (conformal to a punctured disc) whenever it is \emph{$M$-regular}: the pair (surface minus the point, the point) satisfies a Whitney condition, and the length of the spherical slice $S(0,r)\cap E$ decreases at most linearly in $r$, a condition described as ``metric decay''. This is proved via a modulus-of-rings (extremal-length) argument, generalizing the classical planar technique to submanifolds of $\mathbf R^n$. Two classes of examples are treated: surfaces of revolution generated by a single curve (\S2), and subanalytic surfaces (\S7), for which $M$-regularity is verified directly from Hironaka's structure theory, giving as a corollary that every isolated singularity of an orientable subanalytic surface is conformally point-like. A further original result (\S6) is a $C^\infty$ counterexample showing that a strict Whitney condition alone does \emph{not} imply the linear length bound: the two hypotheses in the definition of $M$-regularity are independent.

\tableofcontents

\section{Introduction}\label{sec:intro}

This note presents a problem of extending the complex structure of Riemann surfaces, obtained from submanifolds of $\mathbf{R}^n$, to an isolated singular point. This 2026 edition revises and annotates the original 1989 preprint \cite{Mo89}, which remains the primary source for the results below. The hypotheses on the singularities are essentially metric, and the class of surfaces satisfying them is sufficiently broad (for instance, it includes subanalytic surfaces).

Let us first mention that it was originally assumed that the hypothesis of analyticity of the surfaces was essential for this extension of the complex structure, whereas this hypothesis plays no role in the solution and is more restrictive than necessary: the attempted solutions were initially based on analyticity, which was abandoned only at the end. We now recall the original formulation of the problem. Let $E$ be an analytic subset of $\mathbf{R}^n$, such that $V = (E - \{0\}) \cap B(0, r)$ is an orientable analytic submanifold of $\mathbf{R}^n$ with $\dim V = 2$. Consider the metric $ds^2$ induced by $\mathbf{R}^n$ on $V$: there then exists a conformal structure on $V$ compatible with this metric, in other words a set of local parametrizations $\mathbf{C} \supset U \to V$ such that $ds^2 = \lambda dw d\overline{w}$ (isothermal parameters) \cite[\S2.5]{AS}.

$V$ thus receives a natural Riemann surface structure, and it is therefore interesting to ask under what hypotheses these surfaces are all conformally equivalent to one another or not; it is clear that if all the $V$'s are conformal to each other then a connected representative is the unit disc minus the origin $\Delta^*$.

Let $V$ be connected: from \cite[Ch.~IV, \S6.1]{FK} we deduce that there are only 2 possibilities for the type of $V$, since all Riemann surfaces with $\Pi_1(V) = \mathbf{Z}$ (as in our case, given the previous observation) are conformally equivalent to $\mathbf{C}^*$, $\Delta^*$, or $A_r = \{z \in \mathbf{C} \mid r < |z| < 1, 1 > r > 0\}$. We can immediately discard $\mathbf{C}^*$ by taking $r' < r$, i.e.\ a $V'$ relatively compact in $V$, and we are left with the choice between $V \simeq \Delta^*$ and $V \simeq A_r$. What we expect is that $V$ is always conformal to the punctured disc or, in other words, that the natural conformal structure of $V$ can be uniquely extended to the singular point. Using well-known theorems on holomorphic or harmonic functions in the plane, one sees that the statement to be proved can be expressed in a more intrinsic way as follows: every holomorphic or harmonic function on $V$ that is bounded on a set $B(0, r) \cap V$ can be extended at 0.

Throughout what follows we shall consider orientable $C^2$ submanifolds of $\mathbf{R}^n$ of dimension 2 that have a natural Riemann surface structure induced by the ambient metric: see for instance \cite{Ch}. We shall not be concerned here with minimal differentiability hypotheses. We are particularly interested in surfaces with isolated singularities, that is, submanifolds $S$ of class $C^0$ such that $\exists\ P \in S$ for which $S - \{P\}$ is a $C^2$ manifold. Under the hypotheses: 1) the pair $(S - \{P\}, \{P\})$ satisfies a Whitney condition, 2) the length of the locus of points in $S$ at Euclidean distance $r$ from $P$ is bounded above by a function of the type $Cr$ with $C$ a constant (a condition described as ``metric decay''), we shall prove that all connected components $V$ of a sufficiently small neighborhood of $P$ in $S$, with $H_1(V) = \mathbf{Z}$, are conformal to $\Delta^*$.

This result is proved in paragraph 5, preceded by an introduction to the Whitney condition in paragraph 4. The proof relies on inequality 3.5 for the moduli of rings, which condition 2) of ``metric decay'' allows us to exploit. The Whitney condition is essentially needed to show that suitable ``slices'' of the surface are homeomorphic to rings via 4.5. The same result, in the different setting of surfaces of revolution generated by a curve, is proved in paragraph 2.

We explicitly note that in this case the singularity is not necessarily isolated, since we allow curves with infinitely many accumulation points on the $z$-axis (for example $(x, \sin \frac{1}{x}), x > 0$).

Paragraph 6 presents an example of a $C^\infty$ surface in $\mathbf{R}^3$ with an isolated singularity that satisfies a strict Whitney condition but not condition 2) of metric decay, showing that the two conditions are independent. As a simple example of a surface of revolution that satisfies 2) but not 1) we can consider the surface: $(x, y, \sqrt{x^2 + y^2} \sin \frac{1}{\sqrt{x^2 + y^2}})$ .

Paragraph 7 applies the results obtained to subanalytic surfaces.

This 2026 edition revises the original 1989 preprint \cite{Mo89} and integrates a number of corrections and additions. Corrections concern only the rigor and generality of certain proofs, while all results stated in 1989 are confirmed to hold.  


\section{Surfaces of revolution}

The construction below extends to $\mathbf{R}^n$ the standard construction of a surface of revolution in $\mathbf{R}^3$ \cite{dC}, combined with the classical fact that a change of variable of the type used here exhibits a surface of revolution as conformal to a punctured disc or an annulus \cite{Ch}, \cite{AS}.

Let $\Gamma : {]0, 1]} \to \mathbf{R}^{n-1} = \{x_2, ..., x_{n-1}, z\} \subset \mathbf{R}^n$ be a $C^1$ curve with $\Gamma \cap \{z\text{-axis}\} = \emptyset$, and let $C = \overline{\Gamma} - \Gamma \subset \{z\text{-axis}\}$. Assume moreover:

\begin{itemize}
\item[A)] $\Gamma$ lies in a fixed $2$-plane through the $z$-axis, i.e.\ $\Gamma(u) = (0,\dots,0,x_j(u),0,\dots,0,z(u))$ for some fixed index $j\in\{2,\dots,n-1\}$ and all $u$.
\end{itemize}

Let $\Gamma' = \Gamma|_{]0, 1[}$: the rotation of $\Gamma'$ in $\mathbf{R}^n$ about the $z$-axis generates a $C^1$ surface $V$.

\begin{theorem}\label{thm:2.1}
The submanifold $V$ of $\mathbf{R}^n$ constructed above is conformal to $\Delta^*$ with respect to the metric induced by $\mathbf{R}^n$.
\end{theorem}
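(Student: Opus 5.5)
The plan is to write down the induced metric explicitly in "meridian–angle" coordinates and reduce it to a flat cylinder by a change of the meridian variable, exactly the classical change of variable recalled above. The conformal type then comes down to whether a single real integral diverges. We assume, as is implicit in the construction, that $\Gamma$ is a regular injective $C^1$ curve ($\Gamma'(u)\neq 0$), so that $V$ is an embedded $C^1$ surface, and that $C\neq\emptyset$.

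\textbf{Step 1: coordinates and metric.} By A), write $\Gamma(u)=(0,\dots,x_j(u),\dots,0,z(u))$ and set $\rho(u)=|x_j(u)|$. Since $\Gamma$ does not meet the $z$-axis and is connected, $\rho>0$ and $x_j$ has constant sign on $]0,1]$. The rotation about the $z$-axis turns the $x_j$-coordinate through an angle $\theta$ in a fixed $2$-plane (spanned by $e_j$ and a second axis orthogonal to $e_j$ and to the $z$-axis). This gives a parametrization $(u,\theta)\in{]0,1[}\times\mathbf R/2\pi\mathbf Z\to V$ of the form $(\rho(u)\cos\theta,\rho(u)\sin\theta,z(u))$ in the relevant coordinates. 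A direct computation gives
$$ds^2=|\Gamma'(u)|^2\,du^2+\rho(u)^2\,d\theta^2 .$$
This is the standard formula for a surface of revolution \cite{dC}, since the rotated vector $\partial_\theta$ is orthogonal to the meridian direction.

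\textbf{Step 2: isothermal coordinates.} Define
$$t(u)=\int_u^1\frac{|\Gamma'(s)|}{\rho(s)}\,ds,$$
which is a $C^1$ strictly decreasing function on $]0,1]$ with $t(1)=0$. In the variables $(t,\theta)$ the metric becomes
$$ds^2=\rho^2\,(dt^2+d\theta^2),$$
so $w=e^{-t+i\theta}$ is an isothermal parameter, and these parameters are compatible with the orientation. Hence $V$ is conformally equivalent to $\{e^{-T}<|w|<1\}$, where $T=\lim_{u\to0^+}t(u)\in(0,\infty]$. If $T=\infty$ this set is $\Delta^*$, and if $T<\infty$ it is an annulus $A_r$.

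\textbf{Step 3: divergence of $T$ (main step).} This is where the hypothesis $C\subset\{z\text{-axis}\}$ enters. Since $C\neq\emptyset$, there is a sequence $u_k\to0$ with $\Gamma(u_k)$ converging to a point of the $z$-axis, so $\rho(u_k)\to0$. We do not need $\rho(u)\to0$ along all of $u\to 0$, which matters because of examples like $\sin\frac1x$. It suffices to use $|\Gamma'|\ge|x_j'|=|\rho'|$, which gives
$$t(u_k)\ge\int_{u_k}^1\frac{|\rho'(s)|}{\rho(s)}\,ds\ge\bigl|\log\rho(1)-\log\rho(u_k)\bigr|\longrightarrow\infty .$$
Since $t$ is monotone, $T=\infty$, and therefore $V\simeq\Delta^*$.

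The only delicate point is this last step: ensuring the divergence without any monotonicity or analyticity of $\Gamma$. The total-variation bound on $\log\rho$ handles it uniformly, with no assumption on how wildly $\Gamma$ oscillates near $C$.
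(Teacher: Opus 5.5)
Your proof is correct, and it takes a different and more economical route than the paper. The paper reparametrizes by arc length and splits into two cases. For finite length, it measures $u$ from the endpoint of $\Gamma$ in $C$ and uses $g(u)\le u$ to compare $\int_u^1 ds/g(s)$ with $\int_u^1 ds/s$. For infinite length, it asserts $g(u)\to0$ along the whole end and gets a linear lower bound for the isothermal coordinate.

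You instead keep an arbitrary regular parameter and use the single inequality $|\Gamma'|\ge|\rho'|$. Then $t(u)$ dominates the total variation of $\log\rho$ on $[u,1]$, which diverges as soon as $\liminf_{u\to0}\rho(u)=0$, so no case distinction is needed. The gain is that you need only one sequence $u_k\to0$ with $\rho(u_k)\to0$, i.e.\ $C\neq\emptyset$. The paper's case ii), by contrast, relies on $g(u)\to0$ along the entire end, which does not follow from $C\subset\{z\text{-axis}\}$ alone when $\Gamma$ is unbounded: it may make ever larger excursions away from the axis between returns to it. In exchange, the paper's split is more directly geometric and quantitative.

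Your explicit hypothesis $C\neq\emptyset$ is genuinely necessary. The paper uses it tacitly (``distance from $C$''), and without it the literal statement fails. Take $\Gamma$ a circle off the axis, parametrized injectively on $]0,1]$: then $C=\emptyset$, and $V$ is a torus minus a parallel circle, an annulus of finite modulus.

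The one step you leave implicit is that $\Gamma(u_k)\to c\in C$ forces $u_k\to0$. This follows because each $\Gamma([\varepsilon,1])$ is compact and $c\notin\Gamma$.
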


\begin{proof}
We distinguish the cases: i) $\Gamma$ has finite length, ii) $\Gamma$ has infinite length.

i) We parametrize $V$ by $(u, \theta)$ where $u$ is the distance from $C$ measured along a generating curve $\Gamma$ and $\theta$ is the angle of rotation measured from a fixed generating curve: we may assume $0 < u < 1$. The coordinate curves are orthogonal, so the metric in these coordinates is diagonal; in particular the generators $u \to \chi(u, \overline{\theta})$ are parametrized by arc length, so the coefficient of $du^2$ in the metric is 1, while the coefficient of $d\theta^2$ is $g^2(u)$ with $g(u) = \text{distance in } \mathbf{R}^n \text{ between } \chi(u, \theta) \text{ and } \{z\text{-axis}\}$, since the curve $\theta \to \chi(\overline{u}, \theta)$ is a circle in $\mathbf{R}^n$ of radius $g(u)$.

Explicitly, under hypothesis A), $g(u)=|x_j(u)|$ and
\begin{equation*}
\chi(u,\theta)=(0,\dots,0,g(u)\cos\theta,0,\dots,0,g(u)\sin\theta,0,\dots,0,z(u)),
\end{equation*}
with the two nonzero transverse entries in positions $j$ and $n$ (the auxiliary rotation coordinate $x_1$ used only when $j\ne1$; for $j=n-1$ or when $n=3$ this is simply the classical planar profile $(g(u),z(u))$ rotated in a fixed $2$-plane). Since $\Gamma$'s transverse direction is fixed by A), $\partial_u\chi(u,\theta) = (\dots, x_j'(u)\cos\theta,\dots,x_j'(u)\sin\theta,\dots,z'(u))$ has the same norm for every $\theta$:
\begin{equation*}
|\partial_u\chi(u,\theta)|^2 = x_j'(u)^2\cos^2\theta + x_j'(u)^2\sin^2\theta + z'(u)^2 = x_j'(u)^2+z'(u)^2 = g'(u)^2+z'(u)^2,
\end{equation*}
which equals $1$ because $u$ is the arc-length parameter of $\Gamma$: $g'(u)^2+z'(u)^2 = x_j'(u)^2+z'(u)^2 = |\Gamma'(u)|^2 = 1$. This is exactly where hypothesis A) is used: with a fixed transverse direction there is no contribution from $v'(u)$, so the identity $|\partial_u\chi|\equiv1$, independent of $\theta$, holds on the nose.


The metric is therefore $ds^2 = du^2 + g^2(u)d\theta^2$. We have $g(u) > 0$ since $\Gamma$ does not meet the $z$-axis, and $g(u) \le u$ since the line segment is the curve minimizing the distance between a point and a line. It follows that
\begin{equation*}
\lim_{u \to 0} \int_u^1 \frac{1}{g(s)} ds \ge \lim_{u \to 0} \int_u^1 \frac{1}{s} ds = \infty
\end{equation*}
If we set $t(u) = \int_1^u \frac{1}{g(s)} ds$ for $u < 1$ we have $dt = \frac{du}{g(u)}$, $g^2 dt^2 = du^2$, and with this new parameter we obtain a conformal metric for $V$: $ds^2 = g^2(t)[dt^2 + d\theta^2]$, in other words $(t, \theta) \in {]-\infty, 0[} \times [0, 2\pi[$ are isothermal parameters for $V$.

The map $\varphi : (t, \theta) \to e^t(\cos \theta + i \sin \theta)$ is therefore a conformal map, injective and surjective when regarded as a map $\varphi : V \to \Delta^*$.

ii) Let $\chi_0$ be an arbitrary point on $\Gamma$ to which we assign the origin of the distance, i.e.\ $u = 0$, and let $\Gamma$ be oriented so that the boundary component $C$ corresponds to $u = +\infty$. The surface of revolution $V$ is again $C^1$-parametrized by the parameters $u$ and $\theta$, and again the metric is $ds^2 = du^2 + g^2(u)d\theta^2$.


Since $C \subset \{z\text{-axis}\}$, $\lim_{u \to \infty} g(u) = \lim_{u \to \infty} d(\chi(u, \theta), z\text{-axis}) = 0$ and in particular $g(u) \le 1$ for $u \ge M$ sufficiently large.

It follows that
\begin{equation*}
\rho(u) = \int_1^u \frac{1}{g(s)} ds \ge \int_1^M \frac{1}{g(s)} ds + (u - M)
\end{equation*}
and hence $\lim_{u \to \infty} \rho(u) = \infty$.

With parameters $(\rho, \theta)$ the metric $ds^2 = g^2(\rho)(d\rho^2 + d\theta^2)$ is conformal, and the conformal map $\varphi : {]0, \infty[} \times [0, 2\pi[ \to \Delta^*$ given by $\varphi(\rho, \theta) = e^{-\rho}(\cos \theta + i \sin \theta)$ induces a conformal map of $V$ onto $\Delta^*$.
\end{proof}


\section{Moduli of rings}

\noindent In what follows we shall use the Lebesgue measure $\tau_S$ on a $\mathcal{C}^1$ submanifold of $\mathbb{R}^n$, uniquely determined by the following properties \cite{Su}:

\noindent a) $N$ open in $S \Rightarrow \tau_N(E) = \tau_S(E)$ for every Borel set $E \subset N$

\noindent b) if $f : S \to S^*$ is a $\mathcal{C}^1$ diffeomorphism of submanifolds of $\mathbb{R}^n$ then
\begin{equation*}
\tau_{S^*}(f(E)) = \int_E J_f d\tau_S
\end{equation*}

\noindent c) $S = \mathbb{R}^n \Rightarrow \tau_S$ is the ordinary Lebesgue measure.

\begin{definition}[Modulus of a family of curves \cite{C}]
Let $\Gamma$ be a family of locally rectifiable curves contained in a $\mathcal{C}^2$ submanifold of $\mathbb{R}^n$, and let $F(\Gamma)$ be the family of functions $\rho(x)$ defined $\forall x \in S$ such that:

\noindent 1) $\rho(x) \geq 0 \quad \forall x \in S$;

\noindent 2) $\rho(x)$ is Borel-measurable;

\noindent 3) $\int_\gamma \rho ds \geq 1 \quad \forall \gamma \in \Gamma$.

\noindent We define the modulus of $\Gamma$:
\begin{equation*}
M(\Gamma) = \inf_{\rho \in F(\Gamma)} \int_S \rho^2 d\tau \qquad (d\tau \text{ \ is the Lebesgue measure on } S)
\end{equation*}
\end{definition}

\begin{definition}[Modulus of a ring]
A ring is an open subset of a $\mathcal{C}^2$ submanifold of $\mathbb{R}^n$ conformal to a domain $A$ in $\mathbb{R}^2$ whose complement consists of one bounded component and one unbounded component. The boundary of $A$ has two components $F_0$ and $F_1$. Let $\Gamma_A$ be the family of locally rectifiable curves $\{\gamma \,|\, \gamma \cap F_i \neq \emptyset \text{ and } \gamma \cap (F_0 \cup F_1) = \text{endpoints of } \gamma\}$.

\noindent The modulus of a ring is $M(A) = 2\pi / M(\Gamma_A)$.\ \cite{AS}
\end{definition}

\begin{theorem}\label{thm:3.3}
For a circular ring $A = \{r_1 < |z| < r_2\}$, $M(A) = \log \left( \frac{r_2}{r_1} \right)$ \cite{AS}.
\end{theorem}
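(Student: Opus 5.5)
The plan is to compute $M(\Gamma_A)$ directly and show that $M(\Gamma_A) = 2\pi/\log(r_2/r_1)$. The definition of the modulus of a ring then gives $M(A)=\log(r_2/r_1)$. Here $\Gamma_A$ is the family of curves joining the two boundary circles $|z|=r_1$ and $|z|=r_2$ inside $A$, and $\tau$ is planar Lebesgue measure by property c). The proof has two inequalities: an upper bound from a specific admissible $\rho$, and a lower bound valid for every admissible $\rho$.

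\emph{Upper bound.} Take $\rho_0(z) = \frac{1}{|z|\log(r_2/r_1)}$ on $A$ and $\rho_0=0$ elsewhere. This function is nonnegative and Borel. Let $\gamma\in\Gamma_A$ be locally rectifiable with endpoints on the two circles. Along $\gamma$ we have $|d|z||\le ds$, so
\begin{equation*}
\int_\gamma \rho_0\,ds \ge \frac{1}{\log(r_2/r_1)}\int_\gamma \frac{d|z|}{|z|} = 1 .
\end{equation*}
Hence $\rho_0\in F(\Gamma_A)$. In polar coordinates,
\begin{equation*}
\int \rho_0^2\,d\tau = \frac{1}{\log^2(r_2/r_1)}\int_0^{2\pi}\!\int_{r_1}^{r_2}\frac{dr}{r}\,d\theta = \frac{2\pi}{\log(r_2/r_1)} .
\end{equation*}

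\emph{Lower bound.} Let $\rho\in F(\Gamma_A)$ be arbitrary. For each $\theta$ the radial segment $\gamma_\theta=\{re^{i\theta}: r_1\le r\le r_2\}$ lies in $\Gamma_A$, so $1\le\int_{r_1}^{r_2}\rho(re^{i\theta})\,dr$. Apply Cauchy--Schwarz with weights $r^{1/2}$ and $r^{-1/2}$:
\begin{equation*}
1\le \int_{r_1}^{r_2}\rho^2 r\,dr\cdot\int_{r_1}^{r_2}\frac{dr}{r} = \log(r_2/r_1)\int_{r_1}^{r_2}\rho^2 r\,dr .
\end{equation*}
Integrating over $\theta\in[0,2\pi]$, using Fubini (valid since $\rho$ is Borel and nonnegative), gives $\int_S\rho^2\,d\tau \ge \int_A\rho^2\,d\tau\ge 2\pi/\log(r_2/r_1)$. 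Taking the infimum over $\rho$ yields $M(\Gamma_A)=2\pi/\log(r_2/r_1)$.

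The main point needing care is the boundary and endpoint convention in the definition of $\Gamma_A$. The radial segments must be admissible curves: they meet $F_0\cup F_1$ exactly at their endpoints, so they are. In the upper bound we use only that $\gamma$ joins the two circles. If $\gamma$ leaves $\bar A$ somewhere, this causes no trouble: $\gamma$ meets the boundary only at its endpoints, so by connectedness its interior lies in $A$. Everything else is routine calculus.
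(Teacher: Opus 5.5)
Your proof is correct and follows the paper's argument essentially step for step. The lower bound uses the radial segments, Cauchy--Schwarz with weights $r^{1/2}$ and $r^{-1/2}$, and Fubini; the upper bound uses the extremal density $\frac{1}{|z|\log(r_2/r_1)}$. Your one-line inequality $|d|z||\le ds$ is exactly what the paper isolates and proves as Lemma~\ref{lem:3.4}, via absolute continuity of $|\gamma(s)|$, truncation and monotone convergence. Your connectedness remark on the endpoint convention is a small point the paper leaves implicit.
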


\begin{proof}
Let $\underline{e}$ be a unit vector and $\gamma_e$ a segment in $\Gamma_A$ parallel to $\underline{e}$, and let $\rho \in F(\Gamma_A)$. We observe that $ds = dr$ on $\gamma_e$ with $r = \text{distance from } (0,0)$
\begin{align*}
1 \leq \left( \int_{\gamma_e} \rho dr \right)^2 &= \left( \int_{\gamma_e} \rho r^{1/2} \cdot r^{-1/2} dr \right)^2 \leq \int_{r_1}^{r_2} \frac{1}{r} dr \cdot \int_{r_1}^{r_2} \rho^2 r dr \\
&= \log \frac{r_2}{r_1} \cdot \int_{r_1}^{r_2} \rho^2 r dr.
\end{align*}

\noindent Integrating over $\theta$ and applying Fubini's theorem
\begin{equation*}
2\pi \leq \log \frac{r_2}{r_1} \cdot \int_A \rho^2 dr
\end{equation*}

\noindent and hence $M(\Gamma_A) \geq 2\pi \left( \log \frac{r_2}{r_1} \right)^{-1}$.

\noindent Again integrating in polar coordinates, we observe that the function $\overline{\rho} = \frac{1}{r \log \frac{r_2}{r_1}}$ satisfies equality. We now prove a lemma, in some sense intuitive, which completes the proof by ensuring $\overline{\rho} \in F(\Gamma_A)$:

\begin{lemma}[\cite{V}]\label{lem:3.4}
Let $\rho$ be a nonnegative Borel-measurable function, defined on $[p,q]$, and let $\gamma : [0,\ell] \to \mathbb{R}^n$ be a continuous map whose image is rectifiable and such that $p \leq |\gamma(t)| \leq q \ \ \forall t$. Then
\begin{equation*}
\int_\gamma \rho(|x|) ds \geq \int_{|\gamma(0)|}^{|\gamma(\ell)|} \rho(r) dr.
\end{equation*}
\end{lemma}

\begin{proof}
We may assume that $r_1 = |\gamma(\ell)| \geq |\gamma(0)| = r_0$ and that $\gamma$ is the arc-length parametrization; $\gamma$ is thus $1$-Lipschitz: $|\gamma(s) - \gamma(s_0)| \leq |s - s_0|$. Setting $r(s) = |\gamma(s)|$ we have: $r(s)$ is A.C. $\Rightarrow r(s)$ is differentiable a.e., and for $\rho_k = \min(\rho, k)$ the change-of-variables formula holds:
\begin{equation*}
\int_{r_0}^{r_1} \rho_k(r) dr = \int_0^\ell \rho_k(r(s)) \frac{dr}{ds} ds = \int_\gamma \rho_k \frac{dr}{ds} ds.
\end{equation*}


\begin{equation*}
\left| \frac{dr}{ds} \right| \leq \lim_{s \to s_0} \frac{|\gamma(s_0) - \gamma(s)|}{|s - s_0|} \leq 1
\end{equation*}

\noindent and hence
\begin{equation*}
\int_\gamma \rho_k \frac{dr}{ds} ds \leq \int_\gamma \rho_k \left( \frac{dr}{ds} \right)^{\!\!+} ds \leq \int_\gamma \rho_k ds \leq \int_\gamma \rho ds
\end{equation*}

\noindent taking the limit $k \to \infty$, and using the Monotone Convergence Theorem on the left-hand side, we obtain
\begin{equation*}
\int_{r_0}^{r_1} \rho(r) dr \leq \int_\gamma \rho(|x|) ds \qedhere
\end{equation*}
\end{proof}

\noindent This shows $\overline\rho \in F(\Gamma_A)$, completing the proof.
\end{proof}

\begin{lemma}[\cite{AS}]\label{lem:3.5}
Let $A,\, A_i$, $i = 1, 2, \ldots$ be rings such that $A_i \subset A \quad \forall i$ and $A_i \cap A_j = \emptyset$. Then:
\begin{equation*}
M(A) \geq \sum_{i=1}^\infty M(A_i).
\end{equation*}
\end{lemma}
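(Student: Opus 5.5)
Since $M(A)=2\pi/M(\Gamma_A)$, the inequality $M(A)\ge\sum_i M(A_i)$ is equivalent to
\[
\frac{1}{M(\Gamma_A)}\ \ge\ \sum_i \frac{1}{M(\Gamma_{A_i})}.
\]
So the plan is a superposition of admissible metrics in the spirit of the proof of Theorem~\ref{thm:3.3}. By conformal invariance of the modulus (the Dirichlet-type integral $\int\rho^2\,d\tau$ and the line integrals $\int_\gamma\rho\,ds$ are unchanged under conformal maps with $\rho$ transformed by the conformal factor), I may work in the planar model of $A$. Here $A$ is a domain whose complement has a bounded component $K_0$ and an unbounded component $K_1$, and each $A_i\subset A$ is itself a ring. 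I assume, as is implicit in the statement (and as in the intended application to nested slices), that each $A_i$ \emph{separates} the two boundary components of $A$. That is, the bounded complementary component of $A_i$ contains $K_0$ and its unbounded complementary component contains $K_1$. Without this separation the inequality is false, for instance for small disjoint rings not surrounding the hole, so I make it explicit.

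\textbf{Key topological step.} Every curve $\gamma\in\Gamma_A$ joining $F_0$ to $F_1$ must cross each $A_i$. More precisely, it contains a subarc $\gamma_i\in\Gamma_{A_i}$ joining the two boundary components of $A_i$, and these subarcs are pairwise disjoint because the $A_i$ are. I would prove this by a connectedness argument. Take $t_0$ as the last time $\gamma$ lies in the closure of the bounded complementary component of $A_i$, and $t_1$ as the first later time it meets the unbounded one. The arc between them lies in $A_i$ and has its endpoints on the two boundary components. I expect this to be the main obstacle, because of rigor issues with non-smooth boundaries; I would handle it by exhausting each $A_i$ by sub-rings with analytic boundaries, which only decreases $M(A_i)$ by an arbitrarily small amount.

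\textbf{Analytic step.} Fix $\varepsilon>0$. For each $i$, choose $\rho_i\in F(\Gamma_{A_i})$ with $\rho_i\equiv0$ off $A_i$ and $\int\rho_i^2\le M(\Gamma_{A_i})+\varepsilon_i$. Set $c_i=M(A_i)/\sum_j M(A_j)$ and define
\[
\rho=\sum_i c_i\,\frac{2\pi}{M(A_i)}\,\frac{\rho_i}{2\pi}
\]
(normalized so that the weights $\lambda_i = 1/\sum_j M(A_j)\cdot 2\pi/M(\Gamma_{A_i})\cdot(\dots)$ make $\sum_i \lambda_i=1$ after accounting for the $\int_{\gamma_i}\rho_i\ge1$). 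Concretely, take $\rho=\sum_i\lambda_i\rho_i$ with $\lambda_i=\frac{1/M(\Gamma_{A_i})}{\sum_j 1/M(\Gamma_{A_j})}$. For $\gamma\in\Gamma_A$, the disjoint subarcs $\gamma_i$ give
\[
\int_\gamma\rho\,ds\ \ge\ \sum_i\lambda_i\int_{\gamma_i}\rho_i\,ds\ \ge\ \sum_i\lambda_i=1,
\]
so $\rho\in F(\Gamma_A)$. Since the supports of the $\rho_i$ are disjoint,
\[
\int\rho^2=\sum_i\lambda_i^2\int\rho_i^2\ \approx\ \sum_i\lambda_i^2M(\Gamma_{A_i})=\Big(\sum_j 1/M(\Gamma_{A_j})\Big)^{-1}.
\]
Hence $M(\Gamma_A)\le\big(\sum_j 1/M(\Gamma_{A_j})\big)^{-1}$, up to $\varepsilon$, which is exactly the claim after multiplying by $2\pi$.

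If the sum is infinite, I apply the argument to finite partial sums and then let the number of rings tend to infinity. This also shows $M(A)=\infty$ in that case, which is the form needed later: infinitely many disjoint separating rings of comparable modulus force the ring to be degenerate, i.e.\ $V\simeq\Delta^*$.
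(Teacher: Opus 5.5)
Your argument is correct and is essentially the paper's proof. Your weights $\lambda_i=\bigl(1/M(\Gamma_{A_i})\bigr)/\sum_j 1/M(\Gamma_{A_j})$ are exactly the paper's coefficients $M(A_i)/\sum_{k\le m}M(A_k)$, applied to near-extremal $\rho_i$ for finitely many rings and then letting $m\to\infty$. The separation hypothesis you state explicitly is what the paper uses tacitly when it says every curve of $\Gamma_A$ restricts to a curve of $\Gamma_{A_i}$, and your example shows it cannot be dropped.

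A few clean-ups:
\begin{itemize}
\item Delete the first formula for $\rho$. It reduces to $\rho_i/\sum_j M(A_j)$, which is not admissible in general.
\item Your connectedness argument needs no boundary regularity, so the exhaustion by analytic sub-rings is superfluous.
\item Handle the case $M(A_i)=\infty$ separately, as the paper does, since there $\lambda_i$ is undefined. Your topological step gives $F(\Gamma_{A_i})\subset F(\Gamma_A)$, hence $M(A)\ge M(A_i)=\infty$.
\end{itemize}
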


\begin{proof}
First we note that we may assume $M(A_i) < \infty \ \ \forall i$. Indeed, let $\rho \in F(\Gamma_{A_i})$: since every curve $\gamma$ in $\Gamma_A$ restricts to a curve $\tilde{\gamma}$ in $\Gamma_{A_i}$, then $\rho \in F(\Gamma_A)$: it follows that
\begin{equation*}
M(\Gamma_{A_i}) \geq M(\Gamma_A) \Rightarrow M(A) \geq M(A_i) \quad \forall i.
\end{equation*}

\noindent If $M(A_i) = \infty$ for some $i$ then $M(A) = \infty$ and the inequality holds. Take $\rho_i \in F(\Gamma_i)$ such that
\begin{equation*}
\int_{A_i} \rho_i^2 d\tau < (1 + \epsilon) M(\Gamma_{A_i}).
\end{equation*}

\noindent Then we have
\begin{equation*}
\frac{1}{\int_{A_i} \rho_i^2 d\tau} > \frac{(1 - \epsilon) M(A_i)}{2\pi}.
\end{equation*}

\noindent Fix $m \in \mathbf{N}$ and set
\begin{equation*}
\rho = \begin{cases}
\dfrac{M(A_i)}{\sum\limits_{k=1}^m M(A_k)} \rho_i & \text{on } A_i, \quad i \leq m \\[2ex]
0 & \text{on } A - \bigcup\limits_{k=1}^m A_k
\end{cases}
\end{equation*}

\noindent We have $\rho \in F(\Gamma_A)$ and

\begin{equation*}
\begin{gathered}
M(A) \geq \frac{\left(\sum\limits_{k=1}^m M(A_k)\right)^2}{\sum\limits_{k=1}^m M(A_k)^2 \int_{A_i} \rho_k^2 d\tau} \geq (1 - \epsilon) \frac{\left(\sum\limits_{k=1}^m M(A_k)\right)^2}{\sum\limits_{k=1}^m M(A_k)} = \\
= (1 - \epsilon) \sum_{k=1}^m M(A_k) \qquad \forall \epsilon \text{ and } m. \qedhere
\end{gathered}
\end{equation*}
\end{proof}

\begin{remark}[Conformal invariance]\label{rem:3.6}
Let $S$ be an orientable $\mathcal{C}^2$ submanifold of $\mathbf{R}^n$ and $\Gamma$ a family of locally rectifiable curves contained in an open set $\Omega \subset S$. Let $f$ be a direct or indirect conformal map from $\Omega$ onto an open set $\Omega_1$ of a $\mathcal{C}^2$ submanifold $S^*$.

\noindent Set $\Gamma^* = f(\Gamma) = \text{the family of images } f(\gamma) \text{ of } \gamma \in \Gamma$. Then
\begin{equation*}
M(\Gamma) = M(\Gamma^*).
\end{equation*}

\noindent For the proof, we may assume that we have found Riemann surface structures for $S, S^*$ compatible with the metric induced by $\mathbf{R}^n$, and let $z$ be a local complex coordinate for $S$. Let $f'$ be the complex derivative of $f$, or its conjugate if $f$ is indirect. Let $\rho^* \in F(\Gamma^*)$ and $\rho = \rho^*(f(z))|f'(z)|$. Then clearly $\rho \in F(\Gamma)$, and since $J_f = |f'|^2$, also $\int_S \rho d\tau = \int_{S^*} \rho^* d\tau^*$. It follows that $M(\Gamma) \leq M(\Gamma^*)$. The opposite inequality is obtained by considering $f^{-1}$.
\end{remark}

\section{The Whitney condition}

\begin{definition}\label{def:4.1}
Let $M_i$, $i = 1, 2$ be connected $\mathcal{C}^1$ manifolds in $\mathbf{R}^n$; with $\overline{M_1} \supset M_2$ and $M_1 \cap M_2 = \emptyset$. Given $(x, y) \in M_1 \times M_2$ let $\overline{xy}$ be the line joining $x$ and $y$ in $\mathbf{R}^n$. Consider, in place of the vector space $T_x M_1$ tangent to $M_1$ at $x$, the affine space that is its

\noindent translate by the vector $x$, still denoted $T_x M_1$. Let $\theta(T_x M_1, \overline{xy})$ be the angle between $\overline{xy}$ and its orthogonal projection onto $T_x M_1$, and define
\begin{equation*}
W(x, y) = \sin\left(\theta(T_x M_1, \overline{xy})\right)
\end{equation*}

\noindent $W(x, y)$ is called the Whitney function on $M_1 \times M_2$.\ \cite{W}
\end{definition}

\noindent Let $\tilde{M} = M_1 \cup M_2 \subseteq \mathbf{R}^n$ and let $\Delta$ be the diagonal of $M_2 \times M_2$, which is contained in $\tilde{M} \times M_2$. With reference to this notation we state:

\begin{definition}\label{def:4.2}
$(M_1, M_2)$ satisfies a Whitney condition in $\mathbf{R}^n$ if and only if $W$ extends to a continuous function $\tilde{W}$ on $\tilde{M} \times M_2$ and $\tilde{W}$ vanishes on $\Delta$.
\end{definition}

\begin{definition}\label{def:4.3}
Let $M_1, M_2$ and $W(x, y)$ be as above: we say that $(M_1, M_2)$ satisfies a strict Whitney condition in $\mathbf{R}^n$ if for every compact subset $K$ of $M_2$ there exist $n \in \mathbf{N}$ and $C \in \mathbf{R}_+$ such that
\begin{equation*}
W(x, y)^n \leq C |x - y|
\end{equation*}

\noindent for all $(x, y) \in M_1 \times K$.
\end{definition}

\stepcounter{theorem}

\noindent\textit{Example} \cite{H1}.
\emph{Let $M_1 \subseteq \mathbf{R}^2$ be described in polar coordinates by $M_1 = \{(r, \theta) : r = e^{-\theta^2}, 0 < \theta < \infty\}$ and $M_2 = \{0\}$; we have $M_1 \cap M_2 = \emptyset$ and $\overline{M_1} \supset M_2$.}

\noindent The Whitney function on $M_1 \times M_2$ is given by
\begin{equation*}
W(x, 0) = \frac{\left| \frac{dx_1}{d\theta} \cdot x_2 - \frac{dx_2}{d\theta} \cdot x_1 \right|}{\sqrt{\left(\frac{dx_1}{d\theta}\right)^2 + \left(\frac{dx_2}{d\theta}\right)^2} \sqrt{x_1^2 + x_2^2}} = \frac{1}{\sqrt{4\theta^2 + 1}}
\end{equation*}

\noindent which tends to zero as $\theta \to \infty$, while $\frac{W^n(x, 0)}{d(x, 0)} = \frac{e^{\theta^2}}{(4\theta^2 + 1)^{n/2}} \nearrow \infty$.

\noindent The strict Whitney condition is not needed for the proof of our results as the theorem on point-like singularities  \ref{thm:5.3} and the lemma on metric decay of subanalytics \ref{cor:7.6}  and will only be mentioned in the counterexample of paragraph 6.

\noindent Let now $(M_1, M_2)$ be a pair as above with $M_2 = \{0\}$. We shall denote by $d$ the distance function from the origin in $\mathbf{R}^n$ and by $B(0, r), S(0, r)$ the ball and the sphere in $\mathbf{R}^n$ of center 0 and radius $r$. When $X$ is a rectifiable submanifold of $\mathbf{R}^n$ with $\dim X = 1$, we shall denote by $L(X)$ the length of $X$.

\begin{lemma}\label{lem:4.5}
$(M, \{0\})$ satisfies a Whitney condition $\Rightarrow \exists$ a neighborhood $U$ of 0 in $\mathbf{R}^n$ such that $d|M$ has no critical points on $U \cap M$.
\end{lemma}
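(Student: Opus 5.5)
The plan is to compute the differential of $d|M$ directly and compare it with the Whitney function of Definition~\ref{def:4.1} for $y=0$. For $x\in M$ (here $M_1=M$, $M_2=\{0\}$) the function $d(x)=|x|$ is smooth away from the origin. Its gradient in $\mathbf{R}^n$ is the unit radial vector $x/|x|$. The differential of the restriction $d|M$ at $x$ is therefore the linear form $v\mapsto \langle v, x/|x|\rangle$ on $T_xM$. Consequently $x$ is a critical point of $d|M$ exactly when $x/|x|$ is orthogonal to $T_xM$, that is, when the orthogonal projection of $x/|x|$ onto $T_xM$ vanishes.

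Next I express this through the angle $\theta(T_xM,\overline{x0})$. The line $\overline{x0}$ has direction $x/|x|$. Write $\pi_x$ for the orthogonal projection onto the (vector) tangent space. Then
\begin{equation*}
|\pi_x(x/|x|)|^2 = \cos^2\theta(T_xM,\overline{x0}) = 1 - W(x,0)^2 .
\end{equation*}
Thus $x$ is a critical point of $d|M$ if and only if $W(x,0)=1$, i.e.\ the line to the origin is perpendicular to the tangent plane. I will be careful here that the affine translate convention in Definition~\ref{def:4.1} does not change the angle, since angles are measured between directions.

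Finally I use the Whitney condition of Definition~\ref{def:4.2}. With $M_2=\{0\}$, the diagonal $\Delta$ is the single point $(0,0)$. The hypothesis says that $W(\cdot,0)$ extends continuously to $\tilde M=M\cup\{0\}$ with value $0$ at the origin. By continuity there is $\varepsilon>0$ such that $W(x,0)<1/2$ for all $x\in M$ with $|x|<\varepsilon$. Taking $U=B(0,\varepsilon)$, every $x\in U\cap M$ satisfies $|\pi_x(x/|x|)|^2> 3/4$. So $d(d|M)_x\neq0$ and there are no critical points in $U\cap M$. In fact this gives the quantitative bound $|\nabla(d|M)|\ge\sqrt3/2$ on $U\cap M$, which will likely be useful later (via 4.5) to show that the slices $S(0,r)\cap M$ are smooth curves and that the level sets fibre $M\cap U$ as a product.

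There is no real obstacle. The only point needing care is the identification $|\pi_x(x/|x|)| = \cos\theta$, together with the fact that the extension $\tilde W$ is continuous on $\tilde M\times\{0\}$ with $\tilde W(0,0)=0$, which is exactly what the definition provides.
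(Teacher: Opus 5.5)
Your proof is correct and is essentially the paper's argument: you identify critical points of $d|M$ with $W(x,0)=1$ via $\operatorname{grad} d = x/|x|$, then use continuity of $\tilde W$ at $(0,0)$ with $\tilde W(0,0)=0$. The quantitative lower bound on $|\nabla(d|M)|$ that you note is essentially Addendum \ref{add:4.7}; the paper states it there with threshold $\tfrac12$ in place of your $\sqrt3/2$.
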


\begin{proof}
We observe that $d|M$ has a critical point $x \Leftrightarrow \operatorname{grad} d \perp T_x M \Leftrightarrow x \perp T_x M \Leftrightarrow W(x, 0) = 1$. $W$ is continuous on $M \cup \{0\}$ by hypothesis and $W(0, 0) = 0$, and this proves the lemma.
\end{proof}

\begin{corollary}[\cite{Hi}]\label{cor:4.6}
Let $(M, \{0\})$ be as above. Then $\exists r_0$ such that $S(0, r) \cap M$ are diffeomorphic $\mathcal{C}^1$ manifolds for $r \leq r_0$, of dimension $= \dim M - 1$.

\noindent Moreover $B(0, r_0) \cap M$ is diffeomorphic to $(S(0, r_0) \cap M) \times ]0, r_0[$ with a diffeomorphism $\psi$ such that $\psi(S(0, r) \cap M) = S(0, r_0) \cap M \times \{r\}$.
\end{corollary}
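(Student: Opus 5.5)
By Lemma \ref{lem:4.5} I can fix $r_0>0$ such that $d|M$ has no critical points on $\overline{B(0,r_0)}\cap M\setminus\{0\}$; shrinking $r_0$ if necessary, I may also assume that $W(x,0)\le 1/2$ there, which is possible since $\tilde W$ is continuous and vanishes at $(0,0)$. The plan is to apply Morse-theoretic flow along the gradient of $d|M$. On $M_{r_0}=B(0,r_0)\cap M$ I take
\begin{equation*}
X(x)=\frac{\nabla^M d(x)}{|\nabla^M d(x)|^2},
\end{equation*}
where $\nabla^M d$ is the orthogonal projection of $x/|x|$ onto $T_xM$. It satisfies $|\nabla^M d(x)|=\cos\theta(T_xM,\overline{x0})=\sqrt{1-W(x,0)^2}\ge\sqrt3/2$. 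Hence $X$ is a $\mathcal C^1$ vector field when $M$ is $\mathcal C^2$ (or merely continuous if $M$ is only $\mathcal C^1$, in which case I would smooth it), it is bounded, and $d(\phi_t(x))=d(x)+t$ along its flow $\phi_t$.

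The key steps, in order, are the following.

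\textbf{Step 1.} Each level set $S(0,r)\cap M$, for $0<r\le r_0$, is a $\mathcal C^1$ submanifold of dimension $\dim M-1$. This follows from the regular value theorem, since $r$ is a regular value of $d|M$.

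\textbf{Step 2.} The flow $\phi_t(x)$ starting at $x\in S(0,r_0)\cap M$ exists for all $t\in\,]-r_0,0]$, so the trajectory does not escape $M$ before reaching the levels $d=r_0+t$. Since $|X|\le 2/\sqrt3$, the trajectory has finite speed. While $t>-r_0$ it remains in $\{d\ge r_0+t\}\cap\overline{B(0,r_0)}$, which stays at positive distance from $0$.

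\textbf{Step 3.} Define $\psi^{-1}(y,r)=\phi_{r-r_0}(y)$. This is a $\mathcal C^1$ bijection from $(S(0,r_0)\cap M)\times\,]0,r_0]$ onto $\overline{B(0,r_0)}\cap M\setminus\{0\}$, with inverse $x\mapsto(\phi_{r_0-d(x)}(x),d(x))$. Restricting to $]0,r_0[$ gives the product structure $\psi(S(0,r)\cap M)=S(0,r_0)\cap M\times\{r\}$. Each $\phi_{r-r_0}$ then restricts to a diffeomorphism $S(0,r_0)\cap M\to S(0,r)\cap M$, which gives the first claim.

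The main obstacle is Step 2: the flow must be complete on the relevant interval. A trajectory could a priori leave $M$ through a point of $\overline M\setminus M$ other than $0$. Here I use the standing assumption $\overline{M_1}\supset M_2$ together with the setting of the paper: $M\cup\{0\}$ is locally closed near $0$, i.e.\ $\overline M\cap\overline{B(0,r_0)}\subset M\cup\{0\}$ for small $r_0$, as for the surfaces with isolated singularity considered in the paper. Given this, a maximal trajectory defined on $]t_*,0]$ with $t_*>-r_0$ has bounded speed and therefore a limit point in $\overline M$ at distance $r_0+t_*>0$ from $0$. That point lies in $M$, where $X$ is defined, so the trajectory can be continued, a contradiction.

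Two further points need care. If $M$ is only $\mathcal C^1$, I first replace $X$ by a smooth vector field $Y$ tangent to $M$ with $\langle Y,\nabla^M d\rangle=1$, built by a partition of unity. Second, $S(0,r_0)\cap M$ should be compact, which again follows from local closedness, so that the diffeomorphisms are global.
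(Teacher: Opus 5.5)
Your argument is correct in the $\mathcal C^2$ setting and takes essentially the paper's own route. The paper gives no proof beyond Lemma \ref{lem:4.5} and the citation \cite{Hi}, i.e.\ the regular interval theorem, and your unit-speed gradient flow of $d|_M$ is how that theorem is proved. The case actually used in Theorem \ref{thm:5.3}, where $E-\{0\}$ is $\mathcal C^2$, is therefore fully covered.

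The local-closedness hypothesis you add is not a weakness of your proof. It is the properness that the regular interval theorem requires, and the statement leaves it implicit. Without it the corollary is false: a plane minus the origin and a sequence of disjoint closed discs accumulating at $0$ has $W\equiv0$, yet its slices $S(0,r)\cap M$ alternate between circles and open arcs. In the setting of Definition \ref{def:5.1} the hypothesis holds, because a topological submanifold is locally closed. You should also run your completeness argument forward, since surjectivity in Step 3 needs $\phi_{r_0-d(x)}(x)$ to exist for every $x$; it goes through verbatim.

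The one step that fails as written is your reduction of the merely $\mathcal C^1$ case. There $\nabla^M d$ is only continuous, so forcing $\langle Y,\nabla^M d\rangle\equiv1$ generally produces only a continuous field, whose flow need not be unique. A partition of unity does not cure this. Instead, keep a smooth gradient-like field $Z$ (smooth for a compatible smooth structure on $M$) with $\langle Z,\nabla^M d\rangle>0$. Then replace the time $r-r_0$ by the hitting time $\tau(y,r)$ defined by $d\bigl(\phi^Z_{\tau(y,r)}(y)\bigr)=r$. This $\tau$ is $\mathcal C^1$ by the implicit function theorem, since $\partial_t\, d(\phi^Z_t(y))>0$. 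The map $(y,r)\mapsto\phi^Z_{\tau(y,r)}(y)$ is then the required $\mathcal C^1$ diffeomorphism.
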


\begin{addendum}\label{add:4.7}
Let $(M,\{0\})$ satisfy a Whitney condition, let $r(x)=|x|$, and let $\nabla_M r(x)$ denote the component of $\operatorname{grad} r = x/|x|$ tangent to $M$ at $x$ (the gradient of $r|_M$). Then
\begin{equation*}
|\nabla_M r(x)| = \cos\bigl(\theta(T_xM,\overline{0x})\bigr) = \sqrt{1-W(x,0)^2},
\end{equation*}
where $W$ is the Whitney function of Definition \ref{def:4.1}. Consequently, since $W(x,0)\to W(0,0)=0$ as $x\to0$ (this is exactly the Whitney condition, Definition \ref{def:4.2}), there exists $r_1\le r_0$ such that
\begin{equation*}
|\nabla_M r(x)| \ge \tfrac12 \qquad \text{for all } x \in M\cap B(0,r_1).
\end{equation*}
\end{addendum}

\begin{proof}
$\operatorname{grad} r$ is a unit vector, and it decomposes orthogonally into its component tangent to $M$ at $x$, of length $\cos\theta(T_xM,\overline{0x})$, and its component normal to $M$ at $x$, of length $\sin\theta(T_xM,\overline{0x}) = W(x,0)$; this is precisely the geometric content of Definition \ref{def:4.1} applied to the pair $(M,\{0\})$, since $\overline{xy}$ with $y=0$ is the line through $x$ in the direction of $\operatorname{grad} r(x)$. Pythagoras' theorem for this orthogonal decomposition gives the stated identity. The last claim then follows from the continuity of $\tilde W$ on $\tilde M\times\{0\}$ and $\tilde W(0,0)=0$, by taking $r_1$ small enough that $W(x,0)\le \tfrac{\sqrt3}{2}$, hence $|\nabla_M r(x)|=\sqrt{1-W(x,0)^2}\ge\tfrac12$, for $x\in M\cap B(0,r_1)$.
\end{proof}

\section{Whitney and metric decay imply point-like singularities}

\begin{definition}\label{def:5.1}
Let 0 be a point of a $\mathcal{C}^0$ submanifold $E \subset \mathbf{R}^n$ such that $E - \{0\}$ is an orientable $\mathcal{C}^2$ manifold with $\dim E = 2$. $\{0\}$ is called a conformally point-like singularity if $\exists r$ such that $B(0, r) \cap (E - \{0\})$ is the union of connected components conformal to the unit disc minus the origin.
\end{definition}

\begin{definition}\label{def:5.2}

\noindent Let $E$ be as in Definition \ref{def:5.1}: $\{0\}$ is an $M$-regular singularity of $E$ if
\begin{enumerate}
\item[1)] The pair $(E - \{0\}, \{0\})$ satisfies a Whitney condition.
\item[2)] $\exists r_0$ such that $L(S(0, r) \cap E) \leq Cr$ for $r \leq r_0$
\end{enumerate}
\end{definition}

\begin{theorem}[On point-like singularities]\label{thm:5.3}
Every $M$-regular singularity is conformally point-like.
\end{theorem}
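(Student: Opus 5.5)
We use the Whitney condition to give a small punctured neighbourhood the structure of a product of circles with an interval, and then use metric decay to show that its conformal modulus is infinite. That excludes the annulus $A_r$ and leaves $\Delta^*$.

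\textbf{Step 1: the product structure.} By Lemma \ref{lem:4.5} and Corollary \ref{cor:4.6} applied to $M=E-\{0\}$, there is $r_0$ (shrunk so that Addendum \ref{add:4.7} and condition 2) of Definition \ref{def:5.2} also hold) with the following properties:
\begin{itemize}
\item $B(0,r_0)\cap M \cong (S(0,r_0)\cap M)\times{]0,r_0[}$;
\item the slices $S(0,r)\cap M$ are compact $\mathcal C^1$ curves, each a finite union of circles, of total length at most $Cr$.
\end{itemize}
Each connected component $V$ of $B(0,r_0)\cap M$ is then a topological cylinder $\sigma\times{]0,r_0[}$ with $\sigma$ a circle, so $\Pi_1(V)=\mathbf Z$. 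By the discussion in \S\ref{sec:intro}, and after discarding $\mathbf C^*$ by shrinking to a relatively compact subcylinder, $V$ is conformal to either $\Delta^*$ or $A_\rho$.

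The region $V_{a,b}=V\cap\{a<|x|<b\}$ is a ring. Its boundary components are the slices $\sigma_a$ and $\sigma_b$, and every curve of $\Gamma_{V_{a,b}}$ joins $\sigma_a$ to $\sigma_b$. If $V\simeq A_\rho$, then $M(V_{a,b})$ would stay bounded by $\log(1/\rho)$ as $a\to0$, by monotonicity of the modulus (the first part of the proof of Lemma \ref{lem:3.5}) and Theorem \ref{thm:3.3}. So it suffices to show $M(V_{a,b})\to\infty$ as $a\to0$.

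\textbf{Step 2: a lower bound on the modulus.} Since $M(V_{a,b})=2\pi/M(\Gamma)$, we need an upper bound on $M(\Gamma)$. We use the admissible density
\[
\rho(x)=\frac{c}{|x|\log(b/a)}.
\]
For any curve $\gamma\in\Gamma$ from $\sigma_a$ to $\sigma_b$, Lemma \ref{lem:3.4} (applied in $\mathbf R^n$ with the radial function) gives
\[
\int_\gamma \rho\,ds\ \ge\ \frac{c}{\log(b/a)}\int_a^b\frac{dr}{r}=c,
\]
so $c=1$ makes $\rho$ admissible.

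For the area we use the coarea formula for $r|_V$. By Addendum \ref{add:4.7} we have $|\nabla_M r|\ge\tfrac12$, hence
\[
d\tau\le 2\,dL\,dr,
\]
and therefore
\[
\int_{V_{a,b}}\rho^2\,d\tau\ \le\ \frac{2}{\log^2(b/a)}\int_a^b\frac{L(\sigma_r)}{r^2}\,dr\ \le\ \frac{2C}{\log(b/a)}.
\]
This yields
\[
M(V_{a,b})\ \ge\ \frac{\pi}{C}\,\log\frac{b}{a}\ \longrightarrow\ \infty \quad\text{as } a\to 0.
\]

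\textbf{Step 3: conclusion.} If $V$ were conformal to $A_\rho$, the rings $V_{a,b}$ would be subrings of an annulus of finite modulus. We then need them to be nested essentially, i.e.\ separating the two boundary components. This holds because $\sigma_r$ is homotopically nontrivial in the cylinder, which is exactly what the product structure of Corollary \ref{cor:4.6} provides. Their moduli would then be bounded, contradicting Step 2, so $V\simeq\Delta^*$. An alternative is to apply Lemma \ref{lem:3.5} to the disjoint dyadic rings $V_{2^{-k-1}r_0,\,2^{-k}r_0}$, each of modulus at least $(\pi/C)\log 2$, so that their sum diverges.

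\textbf{Main obstacle.} The delicate point is making the ring definitions rigorous. One must check that $V_{a,b}$ is a ring in the sense of the definition, conformally equivalent to a planar annular domain whose boundary components correspond to $\sigma_a$ and $\sigma_b$. One must also check that the monotonicity and superadditivity of moduli apply to essential subrings of $A_\rho$. I would handle both by transporting through the conformal map to $A_\rho$: a curve crossing the ring $A_\rho$ must cross every essential subring, which is the restriction argument already used in Lemma \ref{lem:3.5}. A second technical point is the coarea estimate, which needs the $\mathcal C^1$ product structure and the lower bound on $|\nabla_M r|$ together.
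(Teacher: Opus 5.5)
Your proof is correct. It uses the same ingredients as the paper's: the product structure of Corollary~\ref{cor:4.6}, the bound $|\nabla_M r|\ge\frac12$ of Addendum~\ref{add:4.7}, the coarea formula with $L(r)\le Cr$, Lemma~\ref{lem:3.4} for admissibility, and $M(A_\rho)=\log(1/\rho)<\infty$. What differs is how you organize the extremal-length estimate.

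The paper cuts out disjoint dyadic rings $A_i=\{2^{-2i-2}<|x|<2^{-2i-1}\}\cap W$ and rescales each to radii $1,2$ by a homothety. It tests with the constant density $\bar\rho\equiv1$ to get the uniform bound $M(A_i)\ge 2\pi/(3C)$, then sums via the superadditivity Lemma~\ref{lem:3.5}. You instead test a single ring $V_{a,b}$ with the scale-invariant density $1/(|x|\log(b/a))$, the extremal density of a round annulus. This absorbs the rescaling step and gives the quantitative bound $M(V_{a,b})\ge(\pi/C)\,M(\{a<|z|<b\})$. It also needs only monotonicity of the modulus rather than Lemma~\ref{lem:3.5}.

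Your route also lets you drop the first half of your ``main obstacle''. Extended by zero to $V$, your $\rho$ is already admissible for $\Gamma_V$ itself. Along any curve of $\Gamma_V$ the function $|x|$ tends to $0$ at one end and to $r_0$ at the other. So the curve contains a subarc in $\{a\le|x|\le b\}$ running from $|x|=a$ to $|x|=b$, and Lemma~\ref{lem:3.4} applies to that subarc. Hence $M(\Gamma_V)\le 2C/\log(b/a)\to 0$ as $a\to0$, i.e.\ $M(V)=\infty$, and you never need $V_{a,b}$ to be a ring in the formal sense.

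The paper's dyadic route does need each piece to be a ring. It covers this only by asserting that each $A_i$ is ``diffeomorphic (and hence conformal) to a ring''. That step really requires the extra fact that a relatively compact doubly connected region bounded by two disjoint circles is conformal to a nondegenerate annulus.
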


\begin{proof}
Let $W$ be a connected component of $B(0, r_0) \cap (E - \{0\})$. $W$ is conformal to $\Delta^*$ or to $A_r = \{r < |z| < 1, r > 0\}$. By Remark \ref{rem:3.6}, Lemma \ref{lem:3.5}, and Theorem \ref{thm:3.3} it suffices to show that there exists a sequence of disjoint rings $A_i$ in $W$ with
\begin{equation*}
\sum_{i=1}^{\infty} M(A_i) = \infty.
\end{equation*}
Set $A_i = \left( B\left(0, \frac{1}{2^{2i+1}}\right) - B\left(0, \frac{1}{2^{2i+2}}\right) \right) \cap W$

\noindent Since $(E - \{0\}, \{0\})$ satisfies a Whitney condition and by Corollary \ref{cor:4.6}, each $A_i$ is diffeomorphic (and hence conformal) to a ring.

\noindent Fixing $i$, we observe that the homothety of $\mathbf{R}^n$ $\psi : (x_1, ..., x_n) \to 2^{2i+2}(x_1, ..., x_n)$ is a conformal map, so $\psi(A_i) = A_i'$ is a ring with $M(A_i') = M(A_i)$. The boundary of $\psi(A_i)$ has components $C_1$ and $C_2$ contained respectively in the spheres $S(0, 1)$ and $S(0, 2)$. Let $\bar{\rho}$ be the function constantly equal to 1 on $A_i'$ and $\gamma \in \Gamma_{A_i'}$; by Lemma \ref{lem:3.4}
\begin{align*}
\int_{\gamma} 1 \, ds \geq \int_{1}^{2} 1 \, dr = 1 &\Rightarrow \bar{\rho} \in F(\Gamma_{A_i'}) \\
\Rightarrow M(\Gamma_{A_i'}) \leq \int_{A_i'} 1 \, d\tau &= \text{Area } (A_i')
\end{align*}

\noindent Let $L(r) = L(S(0, r) \cap W)$: by the coarea formula \cite[Thm.~3.2.12]{Fed} and the hypothesis of $M$-regularity of 0:
\begin{align*}
\text{Area } (A_i') &= (2^{2i+2})^2 \cdot \text{Area } (A_i) = 2^{4i+4} \int_{\frac{1}{2^{2i+2}}}^{\frac{1}{2^{2i+1}}} \frac{L(r)}{|\nabla_W r|} \,dr \leq \\
2\cdot 2^{4i+4} C \int_{\frac{1}{2^{2i+2}}}^{\frac{1}{2^{2i+1}}} r \,dr &= C \cdot 2^{4i+4} \left( \frac{1}{2^{4i+2}} - \frac{1}{2^{4i+4}} \right) = C(4 - 1) = 3 C \\
\Rightarrow M(A_i) \geq \frac{2\pi}{3C} &\quad \forall i \Rightarrow M(W) \geq \sum_{i=1}^{\infty} \frac{2\pi}{3C} = \infty.
\end{align*}



\noindent By Section \ref{sec:intro}, $W$ is conformal to $\Delta^*$, to $\mathbf{C}^*$, or to $A_\rho=\{\rho<|z|<1\}$ for some $\rho\in(0,1)$, and $\mathbf{C}^*$ has already been excluded there. Suppose, for contradiction, that $W$ is conformal to $A_\rho$ for some $\rho\in(0,1)$. The rings $A_i$, $i=1,2,\dots$, are pairwise disjoint subsets of $W$, nested in order of decreasing $i$ toward the puncture $\{0\}$; every curve of $\Gamma_W$ joining the two ends of $W$ therefore restricts to a curve of $\Gamma_{A_i}$ for every $i$, so that Lemma \ref{lem:3.5} applies with $A=W$ and gives
\begin{equation*}
M(W) \geq \sum_{i=1}^\infty M(A_i) = \infty.
\end{equation*}
On the other hand, by Remark \ref{rem:3.6} (conformal invariance of the modulus, applied to the conformal map $W\to A_\rho$) and Theorem \ref{thm:3.3},
\begin{equation*}
M(W) = M(A_\rho) = \log\frac{1}{\rho} < \infty,
\end{equation*}
a contradiction. Hence $W$ is not conformal to any $A_\rho$, and therefore $W$ is conformal to $\Delta^*$.
\end{proof}

\section{Strict Whitney and metric decay are independent}

\noindent In this section we shall construct a surface $S$ in $\mathbf{R}^3$ with an isolated singularity satisfying even a strict Whitney condition \ref{def:4.3}  at the singular point, but such that $S(0, r) \cap S$ has length tending to $\infty$ as $r \to 0$.

\subsection{Basic construction}

\noindent The surface $S$ that we shall construct will be $C^\infty$ away from the singular point $\{0\}$ and will be modeled, in successive ``strips'', on the following sequence of geometric figures of the Koch snowflake curve:


\begin{center}
\begin{tikzpicture}[scale=2.1]
  \begin{scope}[xshift=0cm]
    \draw[thick] (0,1.5) -- (0.866,0) -- (-0.866,0) -- cycle;
   \node at (0.25,-0.8) {\footnotesize $\Gamma_1$};
  \end{scope}

  \begin{scope}[xshift=2.6cm]
 \draw[thick, decoration=Koch snowflake] decorate {
      (0,1.5) -- (0.866,0) -- (-0.866,0) -- cycle
    };
    \node at (0.25,-0.8) {\footnotesize $\Gamma_2$};
  \end{scope}

  \begin{scope}[xshift=5.2cm]
    \draw[thick, decoration=Koch snowflake] decorate { decorate {
      (0,1.5) -- (0.866,0) -- (-0.866,0) -- cycle
    } };
   \node at (0.25,-0.8) {\footnotesize $\Gamma_3$};
  \end{scope}
\end{tikzpicture}
\end{center}

 \noindent In each figure all angles are $60^\circ$ and each figure is obtained from the previous one by replacing each segment of length $\ell$ with a broken line of length $\frac{4}{3} \cdot \ell$ in the following way

\begin{center}
\begin{tikzpicture}[scale=2.1]
  \draw[thick] (0,0) -- (2.7,0);
  \draw (0,0.05) -- (0,-0.05) node[below=2pt] {\footnotesize $0$};
  \draw (0.9,0.05) -- (0.9,-0.05) node[below=2pt] {\footnotesize $\frac{\ell}{3}$};
  \draw (1.8,0.05) -- (1.8,-0.05) node[below=2pt] {\footnotesize $\frac{2\ell}{3}$};
  \draw (2.7,0.05) -- (2.7,-0.05) node[below=2pt] {\footnotesize $\ell$};
  
  \draw (0,0) -- (0,1.1);
  \draw (-0.05, 0.78) -- (0.05, 0.78);
  \node[left=1pt] at (0, 0.78) {\footnotesize $\frac{\ell}{2\sqrt{3}}$};
  
  \draw[dashed] (0,0.78) -- (5.55, 0.78);

  \begin{scope}[xshift=4.2cm]
    \draw (0,0) -- (0,1.1);
    \draw[thick] (0,0) -- (0.9,0) -- (1.35,0.78) -- (1.8,0) -- (2.7,0);
    
    \draw (0,0.05) -- (0,-0.05) node[below=2pt] {\footnotesize $0$};
    \draw (0.9,0.05) -- (0.9,-0.05) node[below=2pt] {\footnotesize $\frac{\ell}{3}$};
    \draw (1.8,0.05) -- (1.8,-0.05) node[below=2pt] {\footnotesize $\frac{2\ell}{3}$};
    \draw (2.7,0.05) -- (2.7,-0.05) node[below=2pt] {\footnotesize $\ell$};
    
    \node[right=1pt] at (1.55, 0.4) {\footnotesize $\frac{\ell}{3}$};
  \end{scope}
\end{tikzpicture}
\end{center}

\noindent Let $L(\Gamma_n)$ be the length of $\Gamma_n$; we have $L(\Gamma_n) = \frac{4}{3} L(\Gamma_{n-1}) = \cdots = (\frac{4}{3})^{n-1} L(\Gamma_1)$.

\noindent In particular $L(\Gamma_n) \nearrow \infty$, while it could be shown that if $A(\Gamma_n)$ is the area enclosed by the broken line $\Gamma_n$, $\exists \lim_{n \to \infty} A(\Gamma_n) < \infty$

\noindent We shall approximate each $\Gamma_n$ by a $C^\infty$ curve $C_n$ with $L(C_n) \geq \frac{L(\Gamma_n)}{3}$. Each curve $C_n$ is homotopic to $C_{n+1}$, and we may view the homotopy as a strip, that is, a $C^\infty$ surface in $\mathbf{R}^3$ with boundary $C_n \cup C_{n+1}$. By suitably joining these strips (cobordisms) and collapsing to $\{0\}$ the ``end'' corresponding to $n \to \infty$, we obtain a surface in $\mathbf{R}^3$ with $\{0\}$ as an isolated singular point. To conclude, we shall further modify the surface by projecting it onto concentric spheres centered at $\{0\}$.

\subsection{Approximation of the $\Gamma_n$}

\noindent Let $\ell_n =$ the length of each segment of $\Gamma_n$. From a fixed $C^\infty$ step function:
\begin{equation*}
\rho' : \mathbf{R} \to \left[ \frac{-\sqrt{3}}{2}, \frac{\sqrt{3}}{2} \right]
\end{equation*}
\begin{equation*}
\rho'(-1) = +\frac{\sqrt{3}}{2} \quad \rho(1) = -\frac{\sqrt{3}}{2}
\end{equation*}

\noindent we obtain the function
\begin{equation*}
\rho_n'(x) = \rho'\left( \frac{12}{\ell_n} \cdot x \right)
\end{equation*}
\begin{equation*}
\rho_n'(x) = \begin{cases}
+\sqrt{\frac{3}{2}} & \text{for } x \leq -\frac{\ell_n}{12} \\
-\sqrt{\frac{3}{2}} & \text{for } x \geq +\frac{\ell_n}{12}
\end{cases}
\end{equation*}
and then, by integration, the function $\rho_n(x)$, with the following graph

\begin{center}
\begin{tikzpicture}[scale=1.1, >=Stealth]
  \draw[->] (-3.2, 0) -- (3.2, 0);
  \draw[->] (0, -0.3) -- (0, 4.2);
  
  \draw[dashed, line width=0.5pt] (-2.2, 0) -- (0, 3.81) -- (2.2, 0);
  
  \draw[thick] (-2.2, 0) -- (-1.1, 1.9055) plot[domain=-1.1:1.1, samples=50] (\x, {2.9703 - 0.88*\x*\x}) -- (2.2, 0);

  \draw[dashed, line width=0.5pt] (-1.1, 0) -- (-1.1, 1.9055);
  \draw[dashed, line width=0.5pt] (1.1, 0) -- (1.1, 1.9055);

  \draw (-2.2, 0.08) -- (-2.2, -0.08) node[below=2pt] {\footnotesize $-\frac{\ell_n}{6}$};
  \draw (-1.1, 0.08) -- (-1.1, -0.08) node[below=2pt] {\footnotesize $-\frac{\ell_n}{12}$};
  \node at (0.15, -0.2) {\footnotesize $0$};
  \draw (1.1, 0.08) -- (1.1, -0.08) node[below=2pt] {\footnotesize $\frac{\ell_n}{12}$};
  \draw (2.2, 0.08) -- (2.2, -0.08) node[below=2pt] {\footnotesize $\frac{\ell_n}{6}$};
  
  \draw (-0.08, 3.810) -- (0.08, 3.81);
  \node[left=2pt] at (0, 3.81) {\footnotesize $\frac{\ell_n}{2\sqrt{3}}$};
  
  \node[above left] at (-2.2, 0.05) {$A$};
  \node[above right] at (0, 3.81) {$B$};
  \node[above right] at (2.2, 0.05) {$C$};
\end{tikzpicture}
\end{center}

\noindent $\rho_n$ approximates the broken line $\Gamma_n$ near a $60^\circ$ angle.

\noindent We have $L(ABC) \geq L(AC) \geq L(\overline{AC}) = \frac{\ell_n}{3}$

\noindent Each $\Gamma_n$ is approximated by a curve $C_n$ with $L(C_n) > \frac{1}{3} L(\Gamma_n)$, since $C_n$ certainly contains at least $\frac{1}{3}$ of each segment of $\Gamma_n$.

\subsection{Homotopy between $C_n$ and $C_{n+1}$}

\noindent We shall construct the homotopy ``piecewise'' for each segment and its corresponding one, so that it is the identity on the intersections of these pieces, in such a way that all the local homotopies join together $C^\infty$: if $f_n$ and $f_{n+1}$ are the functions whose graphs are the arcs $\beta_n$ and $\beta_{n+1}$, we set $h_n(x, t) = t(f_{(n+1)}(x) - f_n(x))$.

\begin{center}
\begin{tikzpicture}[scale=1.1, >=Stealth]
  \draw[->] (-3.2, 0) -- (3.2, 0);
  \draw[->] (0, -0.3) -- (0, 4.2);
  
  \draw[dashed, line width=0.5pt] (-2.2, 0) -- (0, 3.81) -- (2.2, 0);
  
  \draw[thick] (-2.2, 0) -- (-1.1, 1.9055) plot[domain=-1.1:1.1, samples=50] (\x, {2.9703 - 0.88*\x*\x}) -- (2.2, 0);

  \draw[thick] (-2.2, 0) -- (-1.1, 1.9055) plot[domain=-1.1:1.1, samples=50] (\x, {2.4554 - 0.4545*\x*\x}) -- (2.2, 0);

  \draw[->] (-0.8, 2.21) -- (-0.8, 2.35);
  \draw[->] (-0.6, 2.34) -- (-0.6, 2.6);
  \draw[->] (-0.4, 2.43) -- (-0.4, 2.77);
  \draw[->] (-0.2, 2.48) -- (-0.2, 2.88);
  \draw[->] (0.0, 2.50) -- (0.0, 2.92);
 \draw[->] (0.8, 2.21) -- (0.8, 2.35);
  \draw[->] (0.6, 2.34) -- (0.6, 2.6);
  \draw[->] (0.4, 2.43) -- (0.4, 2.77);
  \draw[->] (0.2, 2.48) -- (0.2, 2.88);
 
  \node[right] at (0.6, 2.88) {\footnotesize $\beta_{n+1}$};
  \node[below] at (0.4, 2.21) {\footnotesize $\beta_n$};
 
\end{tikzpicture}
\end{center}

\subsection{Conditions on the tangent spaces}

\noindent Denoting generally by $f_1, f_0$ the functions whose graphs are the arcs in question, we can write the homotopy in $\mathbb{R}^3$ as the surface

\begin{equation*}
g(x, t) = (x, t, t(f_1(x) - f_0(x)) + f_0(x)) \quad 0 \le t \le 1 \quad -a \le x \le a
\end{equation*}

The plane tangent to this surface at points with coordinates $(0, t)$ $0 < t < 1$ is spanned by the vectors $(1, 0, 0)$ and $(0, 1, f_1(0) - f_0(0))$; the projection of the vector $(0, 1, 0)$ onto the tangent plane at these points is therefore parallel to $(0, 1, f_1 - f_0)$, and the angle between the vector and its projection has cosine $\frac{1}{\sqrt{1+(f_1-f_0)^2}}$.

In the general case of points with coordinates $(x, t)$, $x \neq 0$, this is no longer exact, but it is true that the angle between $(0, 1, 0)$ and its projection onto the tangent space, an angle which we generally denote by $\theta(x, t)$, can be bounded above by the angle between $(0, 1, 0)$ and $(0, 1, f_1(x) - f_0(x))$. It follows that $\cos\,\theta(x, t) \ge \frac{1}{\sqrt{1+(f_1-f_0)^2}}$. Suppose the homotopy is between $C_n$ and $C_{n+1}$: then $f_1(x) - f_0(x) \le \frac{\ell_{n+1}}{2\sqrt{3}}$ and hence

\begin{equation*}
\cos^2\,\theta(x, t) \ge 1 - \frac{\ell_{n+1}^2}{12} \Rightarrow \sin^2\,\theta(x, t) \le \frac{\ell_{n+1}^2}{12}
\end{equation*}

\subsection{Gluing the homotopies}

For the homotopy between $C_n$ and $C_{n+1}$ we set

\begin{equation*}
G(x, t) = g(x, \rho(t)) : x \in C_n, \quad t \in [0, 1] \quad \text{and}
\end{equation*}

\begin{equation*}
F(x, t) = G(x, (n+1)^2 t) \quad x \in C_n \quad t \in [0, 1/(n+1)^2]
\end{equation*}

with $\rho$ the usual $C^\infty$ step function: $\rho(0) = 0$, $\rho(1) = 1$: it is immediate to check that

\begin{equation*}
\cos\,\theta \ge \frac{1}{\sqrt{1 + (n+1)^4 (\dot{\rho})^2 (f_1 - f_0)^2}}
\end{equation*}

\begin{equation*}
\Rightarrow \sin^2\,\theta \le (n+1)^4 \dot{\rho}^2 \frac{\ell_n^2}{12} \le (n+1)^4 (\max \dot{\rho})^2 \frac{\ell_n^2}{12} = C(n+1)^4 \ell_n^2.
\end{equation*}

Let $H = \sum_{i=1}^\infty \frac{1}{i^2} < \infty$ and denote by $V_n$ the surface obtained from the homotopy between $C_n$, lying in the plane $\left\{ z = H - \sum_{i=1}^n \frac{1}{i^2} \right\}$, and $C_{n+1}$, lying in the plane $\left\{ z = H - \sum_{i=1}^{n+1} \frac{1}{i^2} \right\}$, both centered on the $z$-axis at their barycenter. $V_n$ and $V_{n+1}$ join together $C^\infty$ (owing to the use of step functions) and

\begin{equation*}
V = \bigcup_{n=1}^{\infty} V_n \cap \{0 < z < H\} \quad \text{is a} \quad C^\infty \text{ surface.}
\end{equation*}

Consider the function $z$ on $V_n$: since $1/i^2$ is decreasing, comparison with $\int_n^\infty dx/x^2=1/n$ and $\int_{n+1}^\infty dx/x^2=1/(n+1)$ gives

\begin{equation*}
\frac{1}{n+1} \ \le\ z \le H - \sum_{i=1}^{n} \frac{1}{i^2} = \sum_{n+1}^{\infty} \frac{1}{i^2} \ \le\ \frac{1}{n} \Rightarrow (1+n) \le \frac{2}{z}.
\end{equation*}

Similarly $(1+n) \ge \frac{1}{z}$. We can thus rewrite the bound for $P \in V_n$, $\theta(P)$ being the angle between the tangent plane at $P$ and the vector $(0, 0, -1)$

\begin{align*}
|\sin\,\theta(P)| &\le C(n+1)^4 \ell_{n+1}^2 = C' \frac{(n+1)^4}{9^{\,n+1}} \le C'\Bigl(\frac{2}{z}\Bigr)^{\!4} 9^{-1/z} \\
&= \frac{16C'}{z^4}\,9^{-1/z} \le k\,z^m \quad \forall m \in \mathbb{N} \quad 0 \le z \le \epsilon(m),
\end{align*}

using $(1+n)\le2/z$ in the polynomial factor and $(1+n)\ge1/z$ (so $9^{-(n+1)}\le9^{-1/z}$) in the exponential factor; the last inequality holds because $9^{-1/z}$ decays, as $z\to0^+$, faster than any power of $z$ can grow.


\subsection{Transformation into a surface with an isolated singularity $\{0\}$}

Consider the map $\psi : \mathbb{R}^3 \to \mathbb{R}^3 \quad \psi(x, y, z) = (xz, yz, z)$

$\psi$ is $C^\infty$ and invertible away from the plane $\{z = 0\}$, so $\psi(V) = W'$ is a $C^\infty$ surface, and we observe that $\{0\}$ is a point adherent to $W'$ and $W = W' \cup \{0\}$ is a surface with an isolated singularity at $\{0\}$. From the construction of $V$ we note that $x^2 + y^2 \le M \quad \forall (x, y, z) \in V$.

Let us now verify that the pair $(W', \{0\})$ satisfies a strict Whitney property. Let $Q$ be a point of $W'$ and $T_Q W'$ the plane tangent at $Q$ to $W'$. We must estimate the angle $\Omega(Q)$ between $-\frac{Q}{|Q|}$ and $T_Q W'$, which we can bound above by the angle $\Delta$ between $-\frac{Q}{|Q|}$ (or a scalar multiple $w$ of it) and any vector $w' \in T_Q W'$. If $Q = (x', y', z') = (xz, yz, z)$, set $w = -(x, y, 1)$. Let $v$ be a unit vector tangent at $P$ to $V$ with $\psi(P) = Q$, such that $v$ is a positive scalar multiple of the projection of $(0, 0, -1)$ onto $T_P V$.

We have already seen in the previous paragraphs that if $\theta(P)$ is the angle between $(0, 0, -1)$ and $v$, then $|\sin\,\theta(P)| \le k\,z^m$ for any integer $m$ and $z \le \epsilon(m)$, and hence $|(0, 0, -1) - v| = 2|\sin\,\frac{\theta}{2}| \le 2\,\sin\,\theta \le k' z^m$: set $w' = d\psi(v)$ and note that we have

\begin{equation*}
w = d\psi(0, 0, -1) = -(x, y, 1) \text{ If } a = w - \frac{\langle w, w'\rangle w'}{|w'|^2}
\end{equation*}
\begin{equation*}
\sin\,\Delta = \frac{|a|}{|w|}
\end{equation*}

recall $|w| \ge 1 \Rightarrow \sin\,\Delta \le |a|$

\begin{align*}
|\sin\,\Omega(Q)| &\le |\sin\,\Delta| = |a| \le |w - w'| = \\
&= |d\psi(0, 0, -1) - d\psi(v)| \le |d\psi| |(0, 0, -1) - v| \le \\
&\le \left(\sqrt{2z^2 + x^2 + y^2 + 1}\right) \cdot k' z^m \le \\
&\le \left(\sqrt{H + M + 1}\right) k' z^m
\end{align*}

since $z' = z$ we obtain $\forall m \in \mathbb{N},\, \exists \epsilon$ such that

\begin{equation*}
|\sin\,\Omega(x', y', z')| \le K(z')^m \quad \forall z \in [0, \epsilon]
\end{equation*}

$W$ resembles roughly a cone over $C_1$ with many ``bumps'' of decreasing size and increasing frequency near the vertex. We now denote by $D_r = W \cap \{z = r\}$. For simplicity set $r_n = H - \sum_{i=1}^n \frac{1}{i^2}$, and with this convention $D_n = D_{r_n}$. $D_r$ is obtained from the corresponding $C_r$ by a homothety of factor $r$ (see the transformation $V \to W$). In particular for the length of $D_r$ we have $L(D_r) = r L(C_r) > \frac{r L(\Gamma_r)}{3}$.

If $r_{n+1} \le r \le r_n$ then $L(D_r) \ge \frac{r_{n+1} L(\Gamma_n)}{3} > \frac{\ell_1}{n+1} \frac{4^n}{3^{n-1}}$, and hence $\lim_{r \to \infty} L(D_r) = \infty$.

\subsection{A strict Whitney $\mathcal{C}^\infty$ surface with unbounded length spherical slices}

Consider the map that orthogonally projects the points with $z > 0$ of the cylinder with axis $z$ and radius $r$ onto the upper hemisphere of radius $r$ centered at the origin.

If we restrict this map to the part of the plane $\{z = r\}$ contained in the cylinder, we obtain a diffeomorphism between a disc and a hemisphere. Combining all these maps as $r$ varies, we obtain

\begin{equation*}
\varphi(x, y, z) = \left(x, y, \sqrt{z^2 - x^2 - y^2}\right)
\end{equation*}

$\varphi$ is defined on the cone $z^2 \ge x^2 + y^2$. We may assume, after suitably shrinking $D_1$, that $W$ is contained in the cone $z^2 \ge k(x^2 + y^2)$, $k > 1$, contained in the cone where $\varphi$ is defined, and thus find the new $\mathcal{C}^\infty$ surface $S' = \varphi(W')$ and $S = \varphi(W)$; $S$ has an isolated singularity at $\{0\}$.

The derivatives of $\varphi$ on the cone $z^2 \ge k(x^2 + y^2)$, $k > 1$, are bounded: for example
\begin{align*}
\left| \frac{x}{(z^2 - x^2 - y^2)^{1/2}} \right| &\le \frac{|x|}{(k(x^2 + y^2) - x^2 - y^2)^{1/2}} \le \\[8pt]
&\le \frac{|x|}{(x^2 + y^2)^{1/2}} \frac{1}{k - 1} \le \frac{1}{k - 1}.
\end{align*}
Hence $W$ satisfies a strict Whitney condition $\Rightarrow S$ satisfies a strict Whitney condition. Let $\theta \to (x(r, \theta), y(r, \theta), z(r, \theta))$ be a parametrization of $D_r$, that is

$z(r, \theta) \equiv r$, $\theta \in [0, 2\pi[$. The curve that is the image of $D_r$ under $\varphi$ has parametrization $(x(\theta)$,

$y(\theta), \sqrt{r^2 - x^2(\theta) - y^2(\theta)})$, and by construction it is exactly $S \cap S(0, r)$, where $S(0, r)$ is the sphere centered at $\{0\}$ of radius $r$; $\varphi(D_r)$ therefore has length
\begin{align*}
&\int_0^{2\pi} \sqrt{\dot{x}^2 + \dot{y}^2 + \left[ \frac{d}{d\theta} \sqrt{z^2 - x^2 - y^2} \right]^2} \, d\theta > \\[8pt]
&> \int_0^{2\pi} \sqrt{\dot{x}^2 + \dot{y}^2} \, d\theta = L(D_r)
\end{align*}
Hence $L(\varphi(D_r)) \to \infty$ as $r \to 0$.

The surface $S$ is $C^\infty$ away from $\{0\}$ : both $\psi$ and $\varphi$ are diffeomorphisms on the open regions where they are used -- $\psi$ on $\{z\ne0\}$, $\varphi$ on the open cone $z^2>x^2+y^2$ -- and are injective on $V$, resp.\ $W'$, since distinct points there have distinct $z$-coordinates or lie in disjoint height ranges, so $S-\{0\}$ is a genuinely embedded $C^\infty$ surface, not merely immersed.
 $(S,\{0\})$ also satisfies a strict Whitney condition (Definition 4.3), in fact the stronger property $|\sin\theta(P)|\le kz^m$ for \emph{every} $m$; and $L(S(0,r)\cap S)\to\infty$ as $r\to0$. This example establishes the claim that a strict Whitney condition does not imply condition 2) of $M$-regularity (Definition 5.2), confirming these hypotheses are independent, as announced in the introduction.

\subsection{The relation to Whitney stratifications and conically smooth structures}

We show here that $\{S-\{0\},\{0\}\}$ form a Whitney stratification in the classical sense of Mather \cite{Ma} and Thom \cite{Th}.  Recall the classical requirements for a stratification of a set $X$ into locally closed smooth submanifolds (``strata''): (i) local finiteness; (ii) the frontier condition (the closure of each stratum is a union of strata); and, for each pair of incident strata $Y\subset\overline X{}'$ with $Y$ in the closure of a stratum $X'$, Whitney's conditions (a) and (b) at every point $y\in Y$: for sequences $x_i\in X'$, $x_i\to y$, with $T_{x_i}X'\to\tau$ (a limit plane, which exists along a subsequence by compactness of the Grassmannian), (a) requires $T_yY\subseteq\tau$, and (b) additionally requires that if $y_i\in Y$, $y_i\to y$ too, then the limiting secant direction of $\overline{x_iy_i}$ (again along a subsequence) also lies in $\tau$.

Here $X=S$, the two strata are $X'=S-\{0\}$ and $Y=\{0\}$: (i) and (ii) are immediate (finitely many -- indeed two -- strata; $\overline{S-\{0\}}=S=(S-\{0\})\cup\{0\}$, a union of strata; $\overline{\{0\}}=\{0\}$). Since $Y=\{0\}$ is $0$-dimensional, $T_yY=\{0\}$, so condition (a) holds trivially for \emph{any} limit plane $\tau$. For (b), since $Y$ is a single point, $y_i\equiv0$ is forced, so the secant $\overline{x_i0}$ is exactly the radial line through $x_i$, and the required condition -- that this line's limiting direction lies in $\tau=\lim T_{x_i}(S-\{0\})$ -- is precisely $W(x_i,0)\to0$ in the sense of Definition 4.1, i.e.\ exactly this paper's own Whitney condition (Definition 4.2) for the pair $(S-\{0\},\{0\})$. We have verified above that this holds -- indeed with the strong, all-$m$ polynomial (in fact exponential) rate -- so condition (b) holds, and with it (b)$\Rightarrow$(a) automatically in general (a classical fact, see \cite{Ma}, though here (a) was already immediate). Therefore $\{S-\{0\},\{0\}\}$ is a bona fide two-stratum Whitney stratification in the sense of \cite{Ma,Th}, and in fact satisfies conditions considerably stronger than the minimum Whitney (a)+(b) requires, since the angle bound is flat (beats every polynomial rate, not merely linear as condition (b) alone would need).

We note that the Whitney condition of Definition 4.2 is, for a pair $(M,\{0\})$ with $\{0\}$ a single point, exactly a restatement of classical condition (b), of which [W] (Whitney's own 1965 paper) is a common ancestor together with \cite{Ma,Th}. This section, and Definition 4.2 more broadly, may be regarded as a self-contained account of the relevant special case of Whitney-stratification theory, distinguishing it (via this very counterexample) from the different, metric ``decay'' condition 2) of Definition 5.2 -- a distinction not drawn in these terms in \cite{Ma,Th} themselves, which are concerned with topological/isotopy consequences of Whitney's conditions rather than with metric decay of slice lengths.

Nocera and Volpe \cite{NV} recently proved that every Whitney stratified space of finite dimension -- in the same sense used above -- admits an essentially unique conically smooth structure in the sense of Ayala--Francis--Tanaka. Since $(S,\{0\})$ is a genuine two-stratum Whitney stratification, one might ask whether its unusual feature -- an isolated point whose ``link'' $D_r$ has length $\to\infty$ as $r\to0$ -- obstructs the existence of the conically smooth structure that \cite{NV} guarantees, making $S$ a counterexample. Of course it does not, and $S$ is not a counterexample. The reason is structural: the proof in \cite{NV} builds the conically smooth atlas entirely from Mather's tubular neighbourhoods, control data, and distance/projection functions -- machinery that depends only on the Whitney condition and on finite-dimensionality, never on any metric bound on the intrinsic length of a link. A conical chart at $\{0\}$ has the form $\mathbb R^0\times C(Z)$ with $Z$ required only to be a \emph{compact} stratified space (\cite{NV}, Definition 2.16); here $Z$ is homeomorphic to $S^1$ (each $D_r$ is a continuous image of a circle), which is compact regardless of how long that circle is metrically. Topological compactness of the link and its intrinsic (arc-length) size are independent properties, and only the former is used anywhere in \cite{NV}'s construction. So $S$ is, if anything, a nice illustration of the robustness of \cite{NV}'s theorem: an exotic-looking Whitney stratification, with an unbounded-length link, that is nonetheless unproblematically conically smooth, exactly as guaranteed.

\section{Subanalytic surfaces have point-like isolated singularities}

\begin{definition}[\cite{HS,H1}]\label{def:7.1}
Let $M, N$ be real analytic manifolds. A subset $E$ of $M$ is called subanalytic in $M$ if every point $x$ of $M$ has a neighborhood $U$ such that $E \cap U$ is the projection of a semianalytic set $A$ relatively compact in $M \times N$: $E \cap U = \prod(A)$ where $\prod : M \times N \to M$ is the natural projection.
\end{definition}

\begin{definition}[\cite{S}]\label{def:7.2}
A relatively compact subanalytic set is called an ``$L$-analytic piece'' if it has a presentation of the form $E = \{(u, \varphi(u)) : u \in \Omega\}$, with $\Omega$ an open set in the subspace $U$ of $X$, $X = U \oplus V$ with $V$ the orthogonal complement of $U$, and $\varphi : \Omega \to V$ an analytic function on $\Omega$ with first derivatives uniformly bounded on $\Omega$.
\end{definition}

\begin{theorem}[\cite{S}]\label{thm:7.3}
A relatively compact subanalytic set $E$ is a finite union of $L$-analytic pieces.
\end{theorem}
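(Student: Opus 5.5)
The plan is to prove Theorem \ref{thm:7.3} by induction on the dimension $n$ of the ambient space $X$, using the standard structure theory of subanalytic sets. Concretely: first reduce to a set $E$ of dimension $d<n$ (if $\dim E=n$, split off the interior as an open piece with $V=0$ and treat the boundary, which has dimension $<n$). Then use subanalytic cell decomposition, or Hironaka's rectilinearization, to write $\overline E$ as a finite union of subanalytic analytic submanifolds $\Sigma_\alpha$, each a graph over an open subset of some $d_\alpha$-dimensional coordinate subspace after a linear change of coordinates. The remaining problem is to arrange \emph{uniformly bounded} first derivatives, which is the content of the $L$-condition.

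The key step, which is where I expect the main difficulty, is a ``good direction'' lemma. For a subanalytic set $\Sigma$ of dimension $d$, consider its Gauss map $x\mapsto T_x\Sigma$ into the Grassmannian $G(d,n)$. This map is subanalytic, so its image has a subanalytic closure of dimension $\le d$, while $G(d,n)$ has dimension $d(n-d)$. I would try to show that one can partition $\Sigma$ into finitely many subanalytic pieces $\Sigma_{\alpha,j}$, and choose for each a $d$-plane $U_j$ belonging to a fixed finite family, so that every tangent plane of $\Sigma_{\alpha,j}$ makes angle at most $\pi/4$ with $U_j$. The partition comes from covering the compact Grassmannian by finitely many open $\pi/4$-neighbourhoods of planes $U_1,\dots,U_N$, and then pulling this covering back by the Gauss map. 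Since the Gauss map is subanalytic, the pullbacks are subanalytic, and refining them into connected analytic strata keeps the count finite. On each such stratum the orthogonal projection onto $U_j$ is a local diffeomorphism, and the stratum is locally a graph of $\varphi$ with $|d\varphi|\le \tan(\pi/4)=1$.

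The second obstacle is that a local graph need not be a global graph, because the projection could fail to be injective. To handle this, I would apply cell decomposition again relative to the projection $\pi_{U_j}$. A subanalytic set on which $\pi_{U_j}$ is a local diffeomorphism with finite fibres has uniformly bounded fibre cardinality, which follows from subanalytic finiteness. Decomposing $\pi_{U_j}(\Sigma_{\alpha,j})$ into cells over which the fibre cardinality is constant, and ordering the sheets there, gives finitely many global graphs $\{(u,\varphi_k(u)):u\in\Omega_k\}$ with $\varphi_k$ analytic and $|d\varphi_k|\le 1$. Relative compactness of each piece is inherited from that of $E$.

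Finally, the lower-dimensional leftovers (boundaries of cells, singular loci of the Gauss map) form a subanalytic set of strictly smaller dimension. The induction hypothesis applies to them, and collecting everything gives finitely many $L$-analytic pieces, as Theorem \ref{thm:7.3} asserts.
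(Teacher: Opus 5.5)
The paper contains no proof of Theorem~\ref{thm:7.3}. It is quoted from Stasica \cite{S} and used only as a black box in Lemma~\ref{lem:7.5}, so there is no in-paper route to compare yours with. Judged on its own, your outline is the standard argument for this kind of statement and is essentially sound:
cover $G(d,n)$ by finitely many small open neighbourhoods of fixed $d$-planes $U_j$;
pull them back by the Gauss map to get finitely many pieces on which $\pi_{U_j}$ is a local diffeomorphism with uniformly bounded $\|d\varphi\|$;
pass from local to global graphs using uniform finiteness of fibres plus a decomposition of the image;
absorb the lower-dimensional debris by induction.
The facts you rely on (subanalyticity of the Gauss map, uniform finiteness, analytic cell decomposition) are all available, because relatively compact subanalytic sets are definable in the o-minimal structure $\mathbb{R}_{\mathrm{an}}$.

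A few points need tidying.

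\emph{The induction variable.} The induction must be on $\dim E$, not on the ambient dimension $n$. Your leftovers live in the same $\mathbb{R}^n$, so an induction on $n$ would not apply to them. Your own first step, splitting off the interior and passing to the boundary, is already an induction on $\dim E$.

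\emph{What you stratify.} You should stratify $E$ itself, or $\overline E$ compatibly with $E$. Otherwise the pieces cover $\overline E$ rather than $E$.

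\emph{Ordering the sheets.} This only makes sense when $\dim V_j=1$. In general, argue as follows. Over an open cell $\Omega\subset U_j$ with constant fibre cardinality $k$, the local homeomorphism $\pi_{U_j}$ is a $k$-sheeted covering: take disjoint sheets around the $k$ preimages of a point and intersect their images. The covering is trivial because $\Omega$ is contractible. The sheets are then the connected components of the preimage, so they are subanalytic. Alternatively, take a cylindrical cell decomposition of the stratum in coordinates adapted to $U_j\oplus V_j$. Since $\pi_{U_j}$ is locally injective on the stratum, no cell contains a nonconstant curve lying in a fibre of $\pi_{U_j}$. Hence every $d$-cell is the graph of a continuous, hence analytic, map over an open cell of $U_j$.

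\emph{A useful strengthening.} If you use $\arctan\varepsilon$-neighbourhoods in $G(d,n)$ instead of $\pi/4$-neighbourhoods, the same argument gives pieces with $\sup\|d\varphi\|\le\varepsilon$ for any prescribed $\varepsilon>0$. This is stronger than the statement. It is the kind of smallness the ``Reduction to $L<1$'' step in the proof of Lemma~\ref{lem:7.5} tries to extract from the Whitney condition, and a bound $\le 1$ would not suffice there, since the constants blow up as $L\to 1$.
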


\begin{theorem}[\cite{H2}]\label{thm:7.4}
Let $M$ be a connected real analytic submanifold of $\mathbb{R}^n$, subanalytic in $\mathbb{R}^n$. Let $N \subseteq \overline{M} - M$ be subanalytic in $\mathbb{R}^n$ and locally closed. Then there exists a closed subset $S$ of $N$ such that
\begin{enumerate}
\item[1)] $S$ is subanalytic
\item[2)] $N - S$ is dense in $N$
\item[3)] $(M, N)$ satisfies a Whitney condition at every point of $N - S$.
\end{enumerate}
\end{theorem}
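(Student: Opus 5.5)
The plan is to follow the classical scheme for generic regularity of subanalytic pairs: define the set of points of $N$ where the condition of Definition \ref{def:4.2} fails, show that this bad set is subanalytic, and then show that it has empty interior in $N$. Its closure $S$ will then be closed and subanalytic, the complement $N-S$ will be dense, and on $N-S$ the pair $(M,N)$ will satisfy the condition.

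\textbf{Reductions.} By Theorem \ref{thm:7.3} and the standard stratification of subanalytic sets, $N$ is a locally finite union of connected analytic submanifolds (strata). I would first discard the strata of dimension $<\dim N$, together with the boundaries between strata; these form a closed subanalytic set with empty interior in $N$. So I can work on an open piece of $N$ that is an analytic submanifold. After a local analytic change of coordinates this piece is a linear subspace $\mathbf{R}^k\times\{0\}$. Such a change distorts $W$ only by bounded factors, and it preserves the vanishing of the extension on $\Delta$.

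\textbf{Subanalyticity of the bad set.} Consider
\begin{equation*}
\Sigma=\{(x,y,\tau,\ell)\ :\ x\in M,\ y\in N,\ \tau=T_xM,\ \ell=\overline{xy}\}
\end{equation*}
inside $\mathbf{R}^n\times\mathbf{R}^n\times G(\dim M,n)\times\mathbf{P}^{n-1}$. It is subanalytic, because $x\mapsto T_xM$ is analytic and $M$ is subanalytic. Its closure is subanalytic, and so is the fibre of that closure over the diagonal $\{x=y\}$. The bad set is the set of $y\in N$ admitting a limit pair $(\tau,\ell)$ with $\ell\not\subset\tau$ (Whitney (b)). Since $T_yN\subset\tau$ (Whitney (a)) is needed for continuity of $\tilde W$ along $N$, I would add to the bad set the points admitting a limit $\tau$ with $T_yN\not\subset\tau$. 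Both are projections of subanalytic sets, and they are relatively compact after localizing, so both are subanalytic.

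\textbf{Density of the good set: the main obstacle.} I would argue by contradiction. Suppose the bad set contains an open subset $U$ of $N=\mathbf{R}^k\times\{0\}$.

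For (a) I would follow Kuo's argument. Fix a point where (a) fails. Using the curve selection lemma, pick an analytic arc $x(t)\in M$ with $x(t)\to y_0$ and $T_{x(t)}M$ bounded away from containing $T_{y_0}N$. Running this construction uniformly over $U$ gives an analytic ``wing'' $(y,t)\mapsto x(y,t)$ over an open subset of $U$. Differentiating along the $y$-directions then shows that $T_{x(y,t)}M$ contains vectors converging to $T_yN$, which is a contradiction.

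For (b), assuming (a) already holds on $U$, I would again use curve selection to get an analytic arc $(x(t),y(t))$ with $x(t)\in M$, $y(t)\in U$ and $W(x(t),y(t))\ge c>0$. Writing $x(t)-y(t)$ as a Puiseux series, I would compare the order of vanishing of its component normal to $N$ with the order of vanishing of the distance from $T_{x(t)}M$ to $T_{y(t)}N$. A Łojasiewicz inequality, applied uniformly on a subanalytic family of such arcs parametrized by an open subset of $U$, should show that the normal component of the secant is asymptotically tangent to $M$. This contradicts $W\ge c$.

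I expect this uniformity to be the hardest part: passing from a single bad arc to a family of arcs parametrized by an open set of $N$, while controlling the exponents uniformly. It is exactly where subanalyticity, rather than mere smoothness, is used; the example in \S6 and the spiral example show that smoothness alone does not suffice.

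Finally, I would set $S$ equal to the closure in $N$ of the union of the bad sets and the discarded lower-dimensional strata. This $S$ is closed and subanalytic, $N-S$ is dense, and at every point of $N-S$ both (a) and (b) hold. These two conditions give the continuous extension $\tilde W$ vanishing on $\Delta$, as required in Definition \ref{def:4.2}.
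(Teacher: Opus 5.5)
The paper gives no proof of this statement. It is quoted from Hironaka \cite{H2}, and the paper only ever applies it with $N=\{0\}$. Your outline follows the classical Whitney--Kuo--Hironaka scheme: subanalyticity of the bad set via the closure of the graph of $(x,y)\mapsto(T_xM,\overline{xy})$, then empty interior via curve selection and a wing. The subanalyticity part is fine.

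The genuine gap is in the step you yourself flag, generic (b). It is not only a matter of uniformity: the mechanism you propose does not work. With $N=\mathbf{R}^k\times\{0\}$, write a bad arc as $x(t)=(x'(t),v(t))$. Let $\delta(t)$ measure how far $T_{x(t)}M$ is from containing $\mathbf{R}^k\times\{0\}$, i.e.\ the defect of (a). Since $\dot x(t)\in T_{x(t)}M$, the normal vector $(0,\dot v)=\dot x-(\dot x',0)$ lies within $\delta(t)\,|\dot x'(t)|$ of $T_{x(t)}M$. As $\dot v$ is asymptotically parallel to $v$, comparing orders along the arc gives tangency of the normal part of the secant only if $\delta(t)\,|\dot x'(t)|=o(|\dot v(t)|)$. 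A {\L}ojasiewicz inequality yields only $\delta\le C|v|^{\alpha}$ for some unknown $\alpha>0$, while a bad arc may slide along $N$ much faster than it approaches $N$. Forcing $\alpha=1$ is Verdier's condition (w), $\delta(T_yN,T_xM)\le C|x-y|$, and its generic validity is a stronger theorem than the one you are proving. A family of such arcs over an open set of $N$ does not remove the drift $\dot x'$ unless it is chosen in a specific way, which the proposal does not identify.

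The missing idea is to use genericity only to put the bad arc inside a normal fibre, where no rate for (a) is needed. First reduce (b) to (b$'$). With $\pi$ the orthogonal projection onto $N$, $x-y=(x-\pi(x))+(\pi(x)-y)$ is an orthogonal splitting, both summands have length at most $|x-y|$, and the second summand is controlled by (a). So it suffices that $\angle(x-\pi(x),T_xM)\to0$.

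Let $B_\epsilon=\{x\in M:\angle(x-\pi(x),T_xM)\ge\epsilon\}$. The (b$'$)-bad set is $\bigcup_m(\overline{B_{1/m}}\cap N)$, so if it has interior, Baire gives one $\epsilon$ and an open $U'\subset N$ with $U'\subset\overline{B_\epsilon}$. The same uniform $\epsilon$ is needed, but not stated, in your (a) step. By the wing lemma, or by definable choice, $y_0\in\overline{B_\epsilon\cap\pi^{-1}(y_0)}$ for all $y_0\in U'$ off a nowhere dense set. Curve selection in that fibre gives $x(t)=y_0+(0,at^q+\cdots)$ with $a\ne0$. Then $\dot x(t)=(0,qat^{q-1}+\cdots)\in T_{x(t)}M$ is asymptotically parallel to $x(t)-\pi(x(t))$, contradicting $x(t)\in B_\epsilon$. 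For $k=0$ this is exactly the one-arc argument that settles the case $N=\{0\}$ used in the paper.

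There are two smaller points. First, discarding all strata of dimension $<\dim N$ destroys the density of $N-S$ when $N$ is not pure-dimensional, since an isolated point of $N$ is open in $N$. Discard only the strata that are not open in $N$, and treat each open stratum with its own $k$. Second, your last sentence overclaims. For positive-dimensional $N$, Definition~\ref{def:4.2} literally requires continuity of $\tilde W$ at off-diagonal points of $N\times N$. Conditions (a) and (b) do not give this in general: limit tangent planes at a point of $N$ need not be unique, and a chord of a curved $N$ can meet them at different angles. ``Whitney condition at $y$'' has to be read as $W(x,y')\to0$ for $(x,y')\to(y,y)$, i.e.\ condition (b) at $y$.
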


\begin{lemma}[Metric decay of subanalytic sets]\label{lem:7.5}
Let $E$ be subanalytic in $\mathbb{R}^n$ with $\dim E = p$: then $\text{Volume } (S(0, r) \cap E) \le C r^{p-1}$ with $C = C(E)$ and $0 \le r \le r_0$.
\end{lemma}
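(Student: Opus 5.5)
We reduce, via Theorem \ref{thm:7.3}, to the case of a single $L$-analytic piece, and then estimate the $(p-1)$-volume of its spherical slices through the graph parametrization, using a uniform bound on the number of slices of the base. First, $\text{Volume}(S(0,r)\cap E)$ is understood as the $(p-1)$-dimensional Hausdorff measure. We may replace $E$ by $E\cap \overline{B(0,r_0)}$, which is relatively compact and subanalytic. By Theorem \ref{thm:7.3} it is then a finite union of $L$-analytic pieces $E_j=\{(u,\varphi_j(u)) : u\in\Omega_j\}$, and Hausdorff measure is subadditive. Each piece of dimension $<p$ contributes zero $(p-1)$-measure to generic slices, or a bound of order $r^{\dim E_j-1}\le r^{p-1}$ for the pieces we care about, up to handling the lower-dimensional ones separately. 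So it suffices to treat one piece $E=\{(u,\varphi(u))\}$ with $\dim U=p$ and $|D\varphi|\le K$ on $\Omega$.

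For such a piece, the map $\Phi(u)=(u,\varphi(u))$ is bi-Lipschitz onto $E$, with constants depending only on $K$: we have $|u-u'|\le|\Phi(u)-\Phi(u')|\le\sqrt{1+K^2}\,|u-u'|$. The slice $S(0,r)\cap E$ is the image under $\Phi$ of the set
\begin{equation*}
\Sigma_r=\{u\in\Omega : |u|^2+|\varphi(u)|^2=r^2\}.
\end{equation*}
This gives $\mathcal H^{p-1}(S(0,r)\cap E)\le (1+K^2)^{(p-1)/2}\,\mathcal H^{p-1}(\Sigma_r)$. Moreover, $\Sigma_r\subset \overline{B_U(0,r)}$, since $|u|\le r$ on it. It therefore remains to bound the $(p-1)$-measure of a subanalytic hypersurface of $U\cong\mathbf R^p$ lying in a ball of radius $r$ by $Cr^{p-1}$.

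For this last bound I would use Crofton's formula (integral geometry). The measure $\mathcal H^{p-1}(\Sigma_r)$ is bounded by a constant times the integral, over affine lines $\lambda$ meeting $B_U(0,r)$, of $\#(\lambda\cap\Sigma_r)$. The measure of the set of lines meeting $B(0,r)$ is $c_p r^{p-1}$. So the whole question reduces to a uniform bound $\#(\lambda\cap\Sigma_r)\le N$, independent of $r$ and of $\lambda$, outside a null set of lines.

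This is the main obstacle, and I would handle it by uniform finiteness for subanalytic families. Consider the family
\begin{equation*}
\{(r,\lambda,u) : u\in\lambda\cap\Sigma_r\},
\end{equation*}
parametrized by the relatively compact set of $(r,\lambda)$. It is subanalytic, being built from the graph of $\varphi$, the sphere, and the incidence relation. Since $E$ is relatively compact, Gabrielov's theorem (or the standard uniform bound on the number of connected components of fibers of a relatively compact subanalytic family) gives a bound $N$ on the number of connected components of the fibers. Fibers that are finite then have at most $N$ points. Lines for which the fiber is infinite meet $\Sigma_r$ in a set containing a segment; for a.e.\ line this does not happen when $\dim\Sigma_r\le p-1$. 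Cases where $\Sigma_r$ has nonempty interior, so that the slice is $p$-dimensional, occur only for finitely many $r$ by subanalyticity, and these may be excluded or made irrelevant by shrinking $r_0$. Combining these steps gives $\mathcal H^{p-1}(S(0,r)\cap E)\le C r^{p-1}$ with $C$ depending on $p$, $K$, $N$ and the number of pieces.
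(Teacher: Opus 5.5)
Your proof is correct in substance, but it follows a genuinely different route from the paper.

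Both arguments use Theorem~\ref{thm:7.3} to reduce to a single $p$-dimensional $L$-analytic piece with bi-Lipschitz graph map $\Phi$. After that the paper works \emph{radially}:
it chooses the splitting $U\oplus V$, adapted to the Whitney condition at $0$, so that the Lipschitz constant of $\varphi$ satisfies $L<1$; it deduces that $t\mapsto|\Phi(t\alpha)|$ is strictly increasing on every ray; and it obtains a map $\alpha\mapsto\Phi(t(\alpha)\alpha)$ from (part of) $S^{p-1}$ onto the slice with Lipschitz constant $C_2(L)\,r$, giving the bound $(C_2(L)r)^{p-1}\mathcal H^{p-1}(S^{p-1})$. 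You instead keep an arbitrary derivative bound $K$ and push the problem down to the base slice $\Sigma_r\subset\overline{B_U(0,r)}$. You then bound $\mathcal H^{p-1}(\Sigma_r)$ by Crofton's formula, together with a uniform bound $N$ on $\#(\lambda\cap\Sigma_r)$ coming from uniform finiteness of fibres of a relatively compact subanalytic family.

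The paper's route is more elementary (calculus plus the Lipschitz-image estimate for Hausdorff measure) and gives an explicit constant depending only on $L$ and $p$. However, it relies on two geometric facts that it does not really establish.
First, it needs a single adapted splitting giving $L<1$ on the whole piece near $0$. The Whitney condition at $0$ only says the radial direction is nearly tangent; it does not make the tangent planes converge. For a cone over a non-planar curve the tangent planes keep rotating, and shrinking $K$ does not help.
Second, it needs each ray from $0$ to stay in $\Omega$ and to meet $\{|\Phi|=r\}$ only once. If rays leave and re-enter $\Omega$, the radial map becomes multivalued, and one needs a uniform bound on the number of branches. That is precisely the uniform-finiteness input your argument supplies.
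Your route needs neither adapted coordinates nor any star-shapedness of $\Omega$. The price is heavier machinery (Crofton's formula for countably rectifiable sets, and a uniform bound on the number of connected components of fibres of relatively compact subanalytic families) and a non-explicit constant.

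Three small repairs are needed.
\begin{itemize}
\item Your aside that lower-dimensional pieces contribute ``$r^{\dim E_j-1}\le r^{p-1}$'' has the inequality backwards for $r<1$. Replace it with the finitely-many-$r$ argument you already use for $\Sigma_r$: a $q$-dimensional piece with $q\le p-1$ meets $S(0,r)$ in dimension $\le p-2$ for all but finitely many $r$, so it contributes zero $\mathcal H^{p-1}$-measure once $r_0$ is small.
\item The claim that almost every line meets $\Sigma_r$ in a finite set needs a short justification. The incidence set $\{(\lambda,u):u\in\lambda\cap\Sigma_r\}$ has dimension $\le 2p-2$, so the lines with infinite fibre form a subanalytic set of dimension $\le 2p-3$ in the $(2p-2)$-dimensional space of lines, hence a null set.
\item The global inequality $|\Phi(u)-\Phi(u')|\le\sqrt{1+K^2}\,|u-u'|$ needs $\Omega$ convex. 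The local Lipschitz bound, or the area formula, suffices for the Hausdorff-measure comparison.
\end{itemize}
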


\noindent We use Theorem \ref{thm:7.3} on the decomposition into $L$-analytic pieces and Theorem \ref{thm:7.4}, which guarantees a Whitney condition for subanalytic sets, to derive this corollary on the $(p-1)$-dimensional volume of $S(0, r) \cap E$ with $E$ subanalytic in $\mathbb{R}^n$ of dimension $p$: we give a proof of this fact, which does not seem to be readily available in the literature.

This corollary, of interest only when $0$ is a singular point of $E$ (in other cases it is trivial), is false if we replace the hypothesis ``subanalytic'' with the weaker hypothesis ``Whitney stratified set'' (i.e.\ adjacent pairs of strata satisfy the classical Whitney condition of \S6.8), as shown in the counterexample described in \S6.7.


A corollary is that every subanalytic set is locally $v$-regular, i.e.\ $\text{Volume } (B(0, r) \cap E) \le C r^p$, but this latter result is obtained more easily as in \cite[Thm.~IV.2]{FE}. In the same article other results are also proved that provide a more complete and interesting picture of the situation.

\begin{proof}
Consider an arbitrary compact open set $K$ containing $0$: by Theorem \ref{thm:7.3}, $K \cap E$ is a union of finitely many graphs of analytic functions with bounded derivatives. For the purpose of computing the volume we need only consider the graphs of dimension $p$, since for the graphs of smaller dimension $F_i$, $i < p$, we have $\dim F_i \cap S(0, r) < p - 1$: applying Theorem \ref{thm:7.4} to the pair $(F_i,\{0\})$ with $N=\{0\}$ forces the exceptional subanalytic subset $S\subset N$ furnished by Theorem \ref{thm:7.4} to be empty (exactly as in the proof of Corollary \ref{cor:7.6} below), so $(F_i,\{0\})$ satisfies a Whitney condition at $0$ itself, not merely on a dense subset of $N$; since $\dim F_i=q\le p-1$, Lemma \ref{lem:4.5} (via Addendum \ref{add:4.7} above) then gives $\dim(F_i\cap S(0,r))\le q-1\le p-2$ for all $r$ small enough that $d|_{F_i}$ has no critical points on $F_i\cap B(0,r)$, which is exactly what Corollary \ref{cor:4.6} provides once $(F_i,\{0\})$ is known to be Whitney -- so $F_i$ is nowhere dense in $S(0, r) \cap E$ and hence has $(p-1)$-volume zero.


Since the graphs are finite in number, we may assume
\begin{gather*}
K \cap E = \left\{ x \in \mathbb{R}^n | x_{p+1} = \varphi^1(x_1, \cdots, x_p) \cdots x_n = \varphi^{n-p}(x_1, \cdots, x_p) \right. \\[8pt]
\text{for } x_1 \cdots x_p \in U \subset\subset \mathbb{R}^p, 0 \in \overline{U}, \; \varphi^i \text{ analytic on } U, \\[8pt]
\left. \lim_{(x_1, \dots, x_p) \to 0} \varphi^i(x_1, \cdots, x_p) = 0, \; \left| \frac{\partial \varphi^i}{\partial x_k} \right| \le C_k^i \right\}
\end{gather*}
Set $\phi(x_1, \dots, x_p) = (x_1, \dots, x_p, \varphi^1, \dots, \varphi^{n-p})$, and write $L$ for the common bound $\max_{k,i} C_k^i$ on the derivatives above, so $\varphi=(\varphi^1,\dots,\varphi^{n-p})$ has Lipschitz constant $L$ on $U$; integrating the derivative bound gives $|\varphi(x)|\le L|x|$ for $x\in U$ near $0$, exactly as just shown. Write $\Phi=\phi$ for this graph map, $\Phi:U\to E_\alpha:=\phi(U)$; then
\begin{equation*}
|x-x'|\ \le\ |\Phi(x)-\Phi(x')|\ \le\ \Lambda|x-x'|, \qquad \Lambda:=\sqrt{1+L^2},
\end{equation*}
i.e.\ $\Phi$ is a bi-Lipschitz parametrization of $E_\alpha$ (lower bound: the first $p$ coordinates alone already give $|\Phi(x)-\Phi(x')|\ge|x-x'|$; upper bound: $|\varphi(x)-\varphi(x')|\le L|x-x'|$). Write $\rho(x):=|\Phi(x)|$, so that $|x|\le\rho(x)\le\Lambda|x|$ and $S(0,r)\cap E_\alpha=\Phi(\rho^{-1}(r))$.

\noindent\textit{Reduction to $L<1$.} By Theorem \ref{thm:7.4} with $N=\{0\}$ (the same point-density argument used just above and in the proof of Corollary \ref{cor:7.6} below), $(E_\alpha,\{0\})$ satisfies a Whitney condition at $0$: the tangent planes $T_xE_\alpha$ make a vanishingly small angle with the radial direction $x$ as $x\to0$. This lets us choose the splitting $\mathbf R^n=U\oplus V$ in Definition \ref{def:7.2} -- i.e.\ which $p$ of the ambient coordinates serve as the graph's base variables -- adapted to this limiting tangent direction, at the cost of shrinking the compact set $K$: for $K$ small enough, the resulting Lipschitz constant $L$ of $\varphi$ on $E_\alpha\cap K$ can be taken $<1$. We assume this has been done; it changes none of the bounds above, only the specific value of $L,\Lambda$.

\noindent\textit{Radial monotonicity.} For a fixed direction $\alpha\in S^{p-1}$ (unit vector in $\mathbf R^p$) with $t\alpha\in U$ for $0<t<t_0(\alpha)$, differentiate $\rho(t\alpha)^2=t^2+|\varphi(t\alpha)|^2$:
\begin{equation*}
\rho(t\alpha)\,\frac{d\rho}{dt} = t + \varphi(t\alpha)\cdot D\varphi(t\alpha)\alpha,
\end{equation*}
and since $|\varphi(t\alpha)|\le Lt$ and $|D\varphi(t\alpha)\alpha|\le L$ (operator-norm bound on the Jacobian, from the derivative bounds), Cauchy--Schwarz gives $|\varphi(t\alpha)\cdot D\varphi(t\alpha)\alpha|\le L^2t$, hence
\begin{equation*}
\rho\,\frac{d\rho}{dt} \ \ge\ t(1-L^2) \ >\ 0 \qquad (L<1).
\end{equation*}
So $t\mapsto\rho(t\alpha)$ is strictly increasing along every admissible ray: for each such $\alpha$ and each $r\le r_0$ there is a unique $t(\alpha)$ with $\rho(t(\alpha)\alpha)=r$.

\noindent\textit{Lipschitz bound on the radial parametrization.} Let $\omega$ be an arc-length parameter along a curve $\alpha(\omega)$ on $S^{p-1}$, $|\dot\alpha|=1$. Differentiating $\rho(t(\omega)\alpha(\omega))\equiv r$ in $\omega$:
\begin{equation*}
\dot t\,(\nabla\rho\cdot\alpha) \ +\ t\,(\nabla\rho\cdot\dot\alpha)\ =\ 0.
\end{equation*}
From the previous step, $\nabla\rho\cdot\alpha=d\rho/dt\ge(1-L^2)t/\rho\ge(1-L^2)/\Lambda$ (using $\rho\le\Lambda t$), and $|\nabla\rho\cdot\dot\alpha|\le|\nabla\rho|\le\Lambda$ (since $\rho$ is $\Lambda$-Lipschitz, being $|\cdot|\circ\Phi$ with $\Phi$ $\Lambda$-Lipschitz). Hence
\begin{equation*}
|\dot t| \ \le\ t\cdot\frac{\Lambda^2}{1-L^2} \ \le\ r\cdot\frac{\Lambda^2}{1-L^2},
\end{equation*}
using $t=t(\omega)\le\rho(t\alpha)=r$ (immediate from $\rho\ge|x|$). Consequently
\begin{equation*}
\left|\frac{d}{d\omega}\bigl(t(\omega)\alpha(\omega)\bigr)\right| \ \le\ |\dot t|+t \ \le\ r\left(\frac{\Lambda^2}{1-L^2}+1\right) \ =:\ C_1(L)\,r,
\end{equation*}
and since $\Phi$ is $\Lambda$-Lipschitz, the curve $\omega\mapsto\Phi(t(\omega)\alpha(\omega))$ on $S(0,r)\cap E_\alpha$ satisfies
\begin{equation*}
\left|\frac{d}{d\omega}\Phi(t(\omega)\alpha(\omega))\right| \ \le\ \Lambda\, C_1(L)\, r \ =:\ C_2(L)\, r.
\end{equation*}

\noindent\textit{Conclusion.} The map $\alpha\mapsto\Phi(t(\alpha)\alpha)$ is therefore Lipschitz, with constant $C_2(L)\,r$, from (a subset of) the unit sphere $S^{p-1}$, with its standard metric, onto $S(0,r)\cap E_\alpha$. A Lipschitz map with constant $M$ does not increase $(p{-}1)$-dimensional Hausdorff measure by more than a factor $M^{p-1}$, so
\begin{equation*}
\mathrm{Volume}(S(0,r)\cap E_\alpha)\ \le\ \bigl(C_2(L)\,r\bigr)^{p-1}\cdot\mathcal H^{p-1}(S^{p-1})\ =\ C(L,p)\, r^{p-1},
\end{equation*}
for every $r\le r_0$ -- no angular coordinates, and no assumption on the directional footprint of $U$. Summing over the finitely many $p$-dimensional pieces given by Theorem \ref{thm:7.3} (the lower-dimensional ones already contribute $0$, as shown above) gives $\mathrm{Volume}(S(0,r)\cap E)\le Cr^{p-1}$ for $r\le r_0$.
\end{proof}


\begin{corollary}\label{cor:7.6}
Every isolated singularity of an orientable subanalytic set $E$ with $\operatorname{dim} E = 2$ is conformally point-like. In particular this holds for $E$ semianalytic and $E$ analytic.
\end{corollary}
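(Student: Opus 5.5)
The plan is to reduce Corollary \ref{cor:7.6} to Theorem \ref{thm:5.3}: it suffices to show that an isolated singularity $0$ of a $2$-dimensional orientable subanalytic set $E$ is $M$-regular in the sense of Definition \ref{def:5.2}. Translate so that the singular point is the origin. Shrink to a ball $B(0,r)$ so that $E-\{0\}$ is a $2$-dimensional $C^2$ (indeed analytic) orientable submanifold there. This is possible because the singular locus of a subanalytic set is subanalytic, and $0$ is isolated in it by hypothesis. Then verify the two conditions 1) and 2) separately.

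For condition 1), apply Theorem \ref{thm:7.4} with $N=\{0\}$ to each connected component $M$ of $(E-\{0\})\cap B(0,r)$. By local finiteness of subanalytic sets there are only finitely many such components, each subanalytic, and $0\in\overline M-M$. Theorem \ref{thm:7.4} gives a closed subanalytic $S\subset N$ with $N-S$ dense in $N$. Since $N$ is a single point, density forces $N-S=N$, that is $S=\emptyset$, so $(M,\{0\})$ satisfies a Whitney condition at $0$. Taking the finite union over components (and shrinking $r$ so that the continuous extensions of $W$ all vanish at $0$) gives the Whitney condition for the pair $(E-\{0\},\{0\})$.

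For condition 2), invoke Lemma \ref{lem:7.5} with $p=2$. The $1$-dimensional volume, i.e.\ the length, of $S(0,r)\cap E$ is at most $Cr$ for $r\le r_0$. By Corollary \ref{cor:4.6}, for small $r$ the slice $S(0,r)\cap E$ is a compact $C^1$ curve, so $L(\cdot)$ in Definition \ref{def:5.2} agrees with $\mathcal H^1$ and the lemma applies directly. Hence $0$ is $M$-regular, and Theorem \ref{thm:5.3} shows that every connected component of $B(0,r)\cap(E-\{0\})$ is conformal to $\Delta^*$. The semianalytic and analytic cases follow because such sets are subanalytic.

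The main obstacle I expect is the bookkeeping in step 1). Theorem \ref{thm:7.4} requires $M$ connected, so I must check that passing to finitely many components is legitimate and that these components really have $0$ in their closure; components not accumulating at $0$ are discarded by shrinking $r$. A secondary point is that Theorem \ref{thm:5.3} is applied componentwise, so the length bound must hold for each component $W$. This is immediate, since $L(S(0,r)\cap W)\le L(S(0,r)\cap E)$.
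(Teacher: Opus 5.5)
Your proposal is correct and follows the paper's proof essentially step for step: you reduce to Theorem \ref{thm:5.3}, obtain condition 1) from Theorem \ref{thm:7.4} with $N=\{0\}$ (density forces the exceptional set $S$ to be empty), and obtain condition 2) from Lemma \ref{lem:7.5} with $p=2$. Your extra bookkeeping makes explicit what the paper leaves implicit. This covers the finitely many subanalytic components accumulating at $0$, the identification of $L$ with $\mathcal H^1$ via Corollary \ref{cor:4.6}, and the componentwise length bound $L(S(0,r)\cap W)\le L(S(0,r)\cap E)$.
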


\begin{proof}
Let $0$ be an isolated singular point of the orientable subanalytic set $E$, $\dim E=2$, and let $M$ be a connected component of $E-\{0\}$ near $0$; by hypothesis $M$ is an orientable $\mathcal{C}^2$ (indeed real-analytic) $2$-manifold with $0\in\overline M-M$. By Theorem \ref{thm:5.3} it suffices to show that $\{0\}$ is an $M$-regular singularity of $E$, i.e.\ that the two conditions of Definition \ref{def:5.2} hold.

\noindent\textit{Condition 2) (metric decay).} Apply Lemma \ref{lem:7.5} with $p=\dim E=2$: $\mathrm{Volume}(S(0,r)\cap E)\le Cr^{p-1}=Cr$ for $r\le r_0$. Since $\dim E=2$, $S(0,r)\cap E$ has dimension $1$, so its $(p-1)$-dimensional volume is exactly its length $L(S(0,r)\cap E)$. This is precisely condition 2) of Definition \ref{def:5.2}.

\noindent\textit{Condition 1) (Whitney condition).} Apply Theorem \ref{thm:7.4} with $N=\{0\}$: a single point is trivially subanalytic in $\mathbb{R}^n$ and closed, hence locally closed, so Theorem \ref{thm:7.4} furnishes a closed subanalytic set $S\subseteq N=\{0\}$ such that $N-S$ is dense in $N$ and $(M,N)$ satisfies a Whitney condition on $N-S$. Since $N=\{0\}$ is a single point, $S$ is either $\emptyset$ or $\{0\}$; but $S=\{0\}$ gives $N-S=\emptyset$, which is \emph{not} dense in $N=\{0\}$ (its closure is $\emptyset\ne\{0\}$), contradicting property 2) of Theorem \ref{thm:7.4}. Hence necessarily $S=\emptyset$, so $N-S=\{0\}$ and property 3) of Theorem \ref{thm:7.4} gives that $(M,\{0\})$ satisfies a Whitney condition \emph{at $0$ itself} -- not merely at a dense subset of singular points, which is all that Theorem \ref{thm:7.4} promises in general, but the strongest possible statement for the degenerate case of a single-point stratum. This is precisely condition 1) of Definition \ref{def:5.2}.

Both conditions of $M$-regularity hold, so by Theorem \ref{thm:5.3}, $\{0\}$ is a conformally point-like singularity of $E$. Since a semianalytic set is subanalytic (take the projection in Definition \ref{def:7.1} to be the identity map) and an analytic set is semianalytic, the last sentence of the statement follows immediately.
\end{proof}


\vfill

\newpage

\nocite{*}
\printbibliography

@book{	AS,
  author    = 	 {Ahlfors, L. V. and Sario, L.},
  title     = 	 {Riemann Surfaces},
  publisher = 	 {Princeton University},
  year      = 	 {1960},
shorthand =	{AS},
}

@book{	C,
  author    = 	 {Caraman, Petru},
  title     = 	 {N-Dimensional Quasiconformal Mappings},
  publisher = 	 {Abacus Press},
  address   = 	 {England},
  year      = 	 {1974},
shorthand =	{C},
}

@article{	Ch,
author = 	 {Chern, Shiing-Shen},
title =	{An elementary proof of the existence of isothermal parameters on a surface},
journal =	 {Proc. Amer. Math. Soc.},
volume =	 {6},
number =	 {},
year =	 {1955},
pages =	 {p. 771},
shorthand =	{Ch},
}

@book{	FK,
  author    = 	 {Farkas, Hershel M. and Kra, Irwin},
  title     = 	 {Introduction to Riemann Surfaces},
  publisher = 	 {Springer,Berlin}, 
  address   = 	 {Springer,Berlin}, 
  year      = 	{1980},
shorthand =	{FK},
}

@article{	FE,
author = 	 {Ferrarotti, Massimo},
title =	 {Volume on Stratified Sets},
journal =	 {Ann. Mat. Pura ed Appl. (IV)},
volume =	 {vol. CXLIV},
pages =	 {p. 183},
shorthand =	{FE},
}

@article{	H1,
author = 	 {Hironaka, Heisuke},
title =	 {Introduction to Real Analitic Sets and Real Analitic Maps},
journal =	 {Quad. CNR-Istituto Matematico dell'Universit\`a di Pisa},
year =	{1973},
shorthand =	{H1},
}

@inproceedings{	H2,
author = 	 {Hironaka, Heisuke},
title =	 {Stratification and Flatness},
booktitle =	 {Proceedings of Nordic Summer School},
Location =	 {Oslo},
year =	{1976},
shorthand =	{H2},
}

@book{	Hi,
  author    = 	 {Hirsch, Morris W.},
  title     = 	 {Differentiable Topology},
series =	 {Graduate Text in Mathematics},
  publisher = 	 {Springer},
  address   = 	 {Berlin},
  year      = 	{1976},
shorthand =	{Hi},
}

@article{	S,
author = 	 {Stasica, Jacek},
title =	 {Whitney Property for Subanalitic Sets},
journal =	 {Zeszyty Nauk. Univ. Jag.},
number =	 {DCXXIII},
year =	 {1982},
pages =	 {p.211},
shorthand =	{S},
}

@book{	SO,
  author    = 	 {Sario, Leo and Oikawa, Kotaro},
  title     = 	 {Capacity Functions},
  publisher = 	 {Springer-Verlag},
  address   = 	 {Berlin}, 
  year      = 	{1969},
shorthand =	{SO},
}

@book{	ST,
  author    = 	 {Struik, Dirk J.},
  title     = 	 {Elementary Differential Geometry},
  publisher = 	 {Addison Wesley},
  year      = 	{1950},
shorthand =	{ST},
}

@article{	Su,
author = 	 {Suominen, Kalevi},
title =	 {Quasiconformal Maps in Manifolds},
journal =	 {Ann. Acad. Sci. Fennicae},
number =	 {AI 393},
year =	 {1966},
pages =	 {p.39},
shorthand =	{Su},
}

@book{	V,
  author    = 	 {V\"ais\"al\"a, Jussi},
  title     = 	 {Lectures on N-Dimensional Quasiconformal Mappings},
  publisher = 	 {Springer-Verlag},
  year      = 	 {1971},
shorthand =	{V},
}

@article{	W,
author = 	 {Whitney, Hassler},
title =	 {Tangents to an Analytic Variety},
journal =	 {Ann. of Math},
volume =	 {vol. 81},
pages =	 {p.496},
Year=	 {1965},
shorthand = {	W},	
}

@incollection{	HS,
author = 	 {Hironaka, Heisuke},
title =	 {Subanalytic Sets},
booktitle =	 {Number Theory, Algebraic Geometry and Commutative Algebra, in honor of Yasuo Akizuki},
publisher =	 {Kinokuniya},
address =	 {Tokyo},
year =	 {1973},
pages =	 {453--493},
shorthand =	 {HS},
}

@unpublished{	Ma,
author = 	 {Mather, John},
title =	 {Notes on Topological Stability},
shorthand =	 {Ma},
year =	 {1970},
}

@article{	Th,
author = 	 {Thom, Ren\'e},
title =	 {Ensembles et morphismes stratifi\'es},
journal =	 {Bulletin of the American Mathematical Society},
volume =	 {75},
pages =	 {240--284},
year =	 {1969},
shorthand =	 {Th},
}

@article{	NV,
author = 	 {Nocera, Guglielmo and Volpe, Marco},
title =	 {Whitney stratifications are conically smooth},
journal =	 {Selecta Mathematica (New Series)},
volume =	 {29},
pages =	 {68},
year =	 {2023},
shorthand =	 {NV},
}

@unpublished{	Mo89,
author = 	 {Mosca, Flavio},
title =	 {Un problema per le singolarit\`a isolate di superficie},
note =	 {Preprint 1.7 (441), Dipartimento di Matematica, Sezione di Geometria e Algebra, Universit\`a di Pisa, April 1989. The original preprint of which this is a 2026 revised and annotated edition.},
year =	 {1989},
shorthand =	 {Mo89},
}

@book{	dC,
author = 	 {do Carmo, Manfredo P.},
title =	 {Differential Geometry of Curves and Surfaces},
publisher =	 {Prentice-Hall},
address =	 {Englewood Cliffs, NJ},
year =	 {1976},
shorthand =	 {dC},
}

@book{	Fed,
author = 	 {Federer, Herbert},
title =	 {Geometric Measure Theory},
series =	 {Grundlehren der mathematischen Wissenschaften},
volume =	 {153},
publisher =	 {Springer-Verlag},
year =	 {1969},
shorthand =	 {Fed},
}

\vfill

\end{document}